\DeclareMathOperator*{\argmin}{arg\,min}
\def\sech{\mathrm{sech}}
        \pgfplotsset{compat = 1.3}
        \pgfplotsset{minor grid style={dotted}} \pgfplotsset{major grid
        style={dashed}}
        \pgfplotsset{every x tick label/.append style={font=\footnotesize,
        yshift=0.25ex}}
        \pgfplotsset{every y tick label/.append
        style={font=\footnotesize, xshift=0.25ex}}
\definecolor{colorclassyorange}{rgb}{0.95000,0.32500,0.09800}
\definecolor{colorchromeyellow}{rgb}{1.00000,0.6549,0}%
\definecolor{colorpaleyellow}{rgb}{1.00000,0.8549,0.1}%
\definecolor{colorclassyblue}{rgb}{0.00000,0.44706,0.74118}%
\definecolor{colorpurple}{rgb}{0.49400,0.18400,0.55600}%
\definecolor{colorfuschia}{rgb}{0.95039,0.0,0.95039}%
\definecolor{colorlemongreen}{rgb}{0.6,0.8,0}%
\definecolor{colorreal}{rgb}{0.92941,0.79412,0.12549}%
\definecolor{colorimag}{rgb}{0.00000,0.49804,0.00000}%
\definecolor{colorabs}{rgb}{1.00000,0.00000,0.00000}%
\def\R#1{(\ref{#1})}
\def\MM#1{\mbox{\boldmath$#1$\unboldmath}}
\newcommand{\go}{\rightarrow}
\newcommand{\dx}{\Delta x}
\newcommand{\dt}{\Delta t}
\newcommand{\norm}[2]{\left\| #2 \right\|_{#1}}
\newcommand{\uu}{\MM{u}}
\newcommand{\xx}{\MM{x}}
\newcommand{\A}{\mathcal{A}}
    \newcommand{\DDD}{\mathcal{D}}
    \newcommand{\LLL}{\mathcal{L}}
\newcommand{\QED}{\hspace*{2em}\hfill$\Box$}
\newtheorem{theorem}{Theorem}
\newtheorem{remark}{Remark}
\newtheorem{lemma}[theorem]{Lemma}
\newenvironment{proof}[1]{\vspace{-2pt}{\em Proof\/}\quad #1}{\QED
  \vspace{8pt}}
\numberwithin{equation}{section}
\begin{document}

\title{Optimal parameters for numerical solvers of PDEs}

\author{Gianluca Frasca--Caccia \& Pranav Singh}
\maketitle

\setcounter{section}{0}

\abstract In this paper we introduce a procedure for identifying optimal methods in parametric families of numerical schemes for initial value problems in partial differential equations. The procedure maximizes accuracy by adaptively computing optimal parameters that minimize a defect-based estimate of the local error at each time-step. Viable refinements are proposed to reduce the computational overheads involved in the solution of the optimization problem, and to maintain conservation properties of the original methods. We apply the new strategy to recently introduced families of conservative schemes for the Korteweg-de Vries equation and for a nonlinear heat equation. Numerical tests demonstrate the improved efficiency of the new technique in comparison with existing methods.

\section{Introduction}
Many highly effective methods for initial value problems in partial differential equations appear as parametric families of numerical schemes. These include exponential splittings \cite{mclachlan_quispel_2002,blanes08sac}, where the free parameters constitute the coefficients of the splitting, and rational Krylov methods \cite{guettel13gm}, where the free parameters are the poles of rational approximants.

A new technique that uses symbolic algebra to develop bespoke finite difference methods that preserve multiple local conservation laws has been recently introduced in \cite{frasca}. This approach has been further refined in \cite{frasca4}, and new families of conservative schemes have been introduced for a range of partial differential equations (PDEs) in \cite{frasca,frasca2,frasca3,frasca4}. These numerical schemes feature certain free parameters that can be arbitrarily chosen without compromising the preservation of the conservation laws.

A convenient choice of the free parameters yields numerical solutions with superior accuracy in all these cases.
Coefficients of exponential splittings are typically determined a-priori using algebraic means in the pursuit of high order accuracies \cite{omf03cpc} and may be specialized for specific PDEs \cite{singh19jcp}. Optimal pole selection for rational Krylov methods remains an active area of research and strategies include a-priori choices based on analytical reasoning \cite{gg14jmaa} and a-posteriori fitting \cite{bg17rkfit}. Optimal parameters for the finite difference methods in \cite{frasca2,frasca,frasca4,frasca3} are identified using a brute force sweep through the entire parameter space, and comparisons against reference solutions show that suitable choices of the parameters yield errors up to 20 times smaller than existing methods for the proposed benchmark tests.

In practice, the optimal choice of such free parameters depends heavily on initial conditions and may also vary with time-step. Consequently, while the results in \cite{frasca,frasca2,frasca3,frasca4} do highlight the potential advantages of choosing good parameters, there is no known algorithm for identifying them. In order to overcome this issue we propose here a new approach for adaptively identifying optimal parameters for families of numerical schemes for PDEs, where convenient values are not known a priori.

For obtaining estimates of the optimal parameters we adaptively minimize an estimate of the local error introduced by the time integrator. In order for this approach to be effective, we assume throughout the paper that the spatial approximation is accurate and that the error in the solution is mainly due to the time discretization. This is not too restrictive an assumption, as in many instances PDEs are approximated very accurately in space, using for example spectral semidiscretizations. In the case of finite difference schemes, this amounts to either considering higher order discretizations in space, or restricting attention to cases where $\dt\gg\dx$. Large time-steps reduce computational expenses and are generally desirable, except for potential stability concerns. In particular, $\dt\gg\dx$ is a typical setting when using implicit schemes.

In the proposed approach, at each time-step of a single step numerical scheme, we seek to compute the optimal parameters that minimize the local error. This requires a reasonably accurate but inexpensive estimate of the local error and its dependence on the parameters. For an a posteriori estimate of the local error, we resort to the ``defect'' based approach outlined in \cite{akt13}. In the context of backward error analysis, the defect measures the discrepancy between the differential equation satisfied by the numerical solution and the original equation \cite{shampine}. Defect based error estimates have been utilized widely in the development of time-adaptive methods for ordinary differential equations (ODEs) \cite{enright89,higham89} and PDEs \cite{akt13,ahk19,ahkks19}, but, to the best of our knowledge, these have not been employed for the estimation of optimal parameters.

Unlike adaptive techniques for choosing time-steps, where the local error can be assumed to decrease monotonically with the time-step, in the proposed approach an optimization problem needs to be solved for finding the values of the parameters. The optimization problem for minimizing the local error estimate is solved in an efficient manner by computing the defect on a coarser (but still accurate) spatial grid, and utilizing an iterative method with a Gauss--Newton approximation to the Hessian for achieving locally quadratic convergence.

We apply the new procedure to families of schemes introduced in \cite{frasca} for the Korteweg de Vries (KdV) and a nonlinear heat (NLH) equation. The main feature of these schemes is that each of them preserves a specific discretization of a conservation laws. However, since the discrete conservation laws also depend on the parameters, these cannot be preserved by using an adaptive approach. Where conservation of these properties is of paramount importance, we suggest a more conservative version of the algorithm that uses fixed  parameters derived from a sequence of values obtained adaptively. 

The resulting schemes have significantly higher accuracy with moderate overheads. Despite the defect based approach being asymptotic in the time-step, $\dt$, in practice the procedure also works well in the large time-steps regime and, in some cases, also confers a notable stability advantage.

In Section~\ref{DefSec} we discuss the validity of a defect based approximation of the local error for the purpose of adaptively identifying optimal parameters. In Section~\ref{General} we outline the defect based approach used for finding optimal values of free parameters in a numerical scheme, and introduce the two algorithms briefly outlined above.  In Section~\ref{S3framework}, we apply the new techniques to families of conservative schemes introduced in \cite{frasca} for the KdV equation and the NLH equation, giving explicit expressions for the defect. In Section~\ref{numerics}, we show numerical results that demonstrate the effectiveness of the proposed algorithms in finding good estimates of the optimal parameters, together with their higher accuracy and efficiency in comparison to a default choice of the parameters and other schemes from the literature.

\section{Defect based approximation of local error}
We consider a PDE,
\begin{equation}
\label{eq:nonlinearPDE}
\partial_t u(t) = \A(u(t)), \qquad t \geq 0,\qquad u(0) = u_0 \in \mathcal{H},
\end{equation}
written as an Initial Value Problem on the Hilbert space $\mathcal{H}$, where
$\A: \mathcal{H} \go \mathcal{H}$. Boundary conditions and non-autonomous PDEs can also be incorporated into our approach in a straightforward manner, as demonstrated with concrete examples in Section~\ref{NLH}.

Following spatial discretization, the solution of \R{eq:nonlinearPDE} is approximated by the solution of the system of ODEs,
\begin{equation}
\label{eq:nonlinearODE}
\mathcal{D}_t \uu(t) = A(\uu(t)), \qquad t \geq 0,\quad \uu(0) = \uu_0 \in \mathbb{R}^M,
\end{equation}
where here and henceforth $\DDD_z$ denotes the total derivative with respect to $z$, and $\uu(t)$ represents a finite dimensional approximation of $u(t)$.
For instance, this could involve a finite difference approximation on a uniform grid on the domain $[a,b]$ with Dirichlet boundaries,
\begin{equation*}
x_m=a+m\Delta x,\qquad m=0,\ldots,M+1,\qquad \Delta x= (b-a)/(M+1).
\end{equation*}
Let $T$ be the final time of integration,
\begin{equation*}
 t_n=n\Delta t,\qquad n=0,\ldots,N,\quad \text{and} \quad \Delta t=T/N
\end{equation*}
be the time nodes and stepsize, respectively, $u_{m,n}$ an approximation of $u(x_m,t_n)$, and $\uu_n$ the column vector whose $m$-th entry is $u_{m,n}$.

The exact solution of \R{eq:nonlinearODE} is described by the flow $\mathcal{E}: \mathbb{R}^+ \times \mathbb{R}^M \go \mathbb{R}^M$,
\begin{equation*}
\uu(t) = \mathcal{E}(t, \uu_0).
\end{equation*}
Similarly, a  single step numerical scheme for \R{eq:nonlinearODE} can be described by the numerical flow,
\begin{equation*}
\uu_{n+1} = \Phi(\Delta t, \uu_n).
\end{equation*}
Note that the numerical flow $\Phi$ also exists for implicit methods, even if not specified in an explicit form.

In this manuscript, we consider numerical schemes in the form
\begin{equation}
\label{eq:solver}
\uu_{n+1} =  \Phi(\dt, \uu_n, \chi), \qquad \Phi: \mathbb{R}^+ \times \mathbb{R}^M \times \Omega \go \mathbb{R}^M,
\end{equation}
where $\Omega$ is a compact subset of $\mathbb{R}^K$, and $\Phi$ depends on a vector of free parameters $\chi \in \Omega$ that effect the accuracy of the scheme. In our theoretical discussion we assume that the vector field $A$, the exact flow $\mathcal{E}$ and the numerical flow $\Phi$ are smooth with respect to all arguments.

The local error in the numerical method  \R{eq:solver} is defined as
\begin{equation}
\label{eq:LEe}
\LLL(\dt,\uu_n,\chi) = \Phi(\dt, \uu_n,  \chi) - \mathcal{E}(\dt, \uu_n).
\end{equation}
In general, $\LLL(\dt,\uu_n,\chi)$ is not a computable quantity since the exact solution $\mathcal{E}(\dt, \uu_n)$ is not available in practice. Consequently, we resort to defect-based approximations \cite{akt-linear,akt13} to obtain a posteriori estimates.
The {\em defect} or {\em residual} of $\Phi$,
\begin{equation}
\label{eq:defect}
\mathcal{R}(\dt, \uu_n, \chi) = \mathcal{D}_{\dt} \Phi(\dt, \uu_n, \chi) - A(\Phi(\dt, \uu_n,  \chi)),
\end{equation}
quantifies the extent to which the numerical flow $\Phi$ fails to satisfy \R{eq:nonlinearODE}.

For nonlinear parabolic PDEs and time-reversible equations, the following nonlinear variation-of-constant formula holds true \cite{ahk19,DescThal}. Henceforth, $\partial_k f$ denotes the Fr\'echet derivative of a function $f$ with respect to the $k$-th argument.
\begin{lemma}[Gr\"obner-Alekseev formula]\label{GAlemma}
The analytical solutions of the following initial value problems
\begin{align*}
&\left\{\begin{array}{l}
\mathcal{D}_t\uu(t)=H(t,\uu(t))=G(\uu(t))+R(t,\uu(t)),\qquad 0\leq t \leq T,\\
\uu(0)=\uu_0,
\end{array}\right.\\
&\left\{\begin{array}{l}
\mathcal{D}_t\uu(t)=G(\uu(t)),\qquad 0\leq t \leq T,\\
\uu(0)=\uu_0,
\end{array}\right.
\end{align*}
are related through the nonlinear variation-of-constants formula
$$\mathcal{E}_H(t,\uu_0)=\mathcal{E}_G(t,\uu_0)+\int_0^t\partial_2\mathcal{E}_G(t-\tau,\mathcal{E}_H(\tau,\uu_0))\cdot R(\tau,\mathcal{E}_H(\tau,\uu_0))\mathrm{d}\tau, \quad 0\leq t\leq T.$$
\end{lemma}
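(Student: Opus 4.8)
The plan is to reduce the formula to the fundamental theorem of calculus applied to a single scalar-parameter family of states that interpolates between the two flows. Concretely, I would fix $t$ and $\uu_0$ and define the curve
\[
\phi(\tau) = \mathcal{E}_G\!\left(t-\tau,\, \mathcal{E}_H(\tau,\uu_0)\right), \qquad 0 \le \tau \le t,
\]
which evolves the state $\mathcal{E}_H(\tau,\uu_0)$ of the full problem under the unperturbed $G$-flow for the remaining time $t-\tau$. Evaluating at the endpoints gives $\phi(0) = \mathcal{E}_G(t,\uu_0)$ and $\phi(t) = \mathcal{E}_H(t,\uu_0)$, so that $\mathcal{E}_H(t,\uu_0) - \mathcal{E}_G(t,\uu_0) = \int_0^t \phi'(\tau)\,\mathrm{d}\tau$. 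The whole proof then comes down to identifying $\phi'(\tau)$ with the integrand in the claimed formula.

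Next I would differentiate $\phi$ with the chain rule in both arguments. The derivative of the inner second argument is supplied by the governing equation $\mathcal{D}_\tau \mathcal{E}_H(\tau,\uu_0) = H(\tau, \mathcal{E}_H(\tau,\uu_0))$, while differentiating the first argument produces $-\partial_1 \mathcal{E}_G$. Using that $\mathcal{E}_G$ solves the $G$-problem, I would rewrite $\partial_1 \mathcal{E}_G(s,\vv) = G(\mathcal{E}_G(s,\vv))$, so that, writing $\vv = \mathcal{E}_H(\tau,\uu_0)$ and $s = t-\tau$,
\[
\phi'(\tau) = -G\!\left(\mathcal{E}_G(s,\vv)\right) + \partial_2 \mathcal{E}_G(s,\vv)\cdot H(\tau,\vv).
\]
Splitting $H = G + R$ isolates the $R$-contribution as $\partial_2 \mathcal{E}_G(s,\vv)\cdot R(\tau,\vv)$, which is exactly the desired integrand, so it remains only to show that the two $G$-terms cancel.

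The heart of the argument --- and the step I expect to be the main obstacle --- is the identity $\partial_2 \mathcal{E}_G(s,\vv)\cdot G(\vv) = G(\mathcal{E}_G(s,\vv))$, which makes the leftover $G$-terms vanish. This is where autonomy of the $G$-problem is essential: differentiating the semigroup relation $\mathcal{E}_G(s, \mathcal{E}_G(r,\vv)) = \mathcal{E}_G(s+r,\vv)$ with respect to $r$ at $r=0$ yields precisely this identity, since the $r$-derivative of the inner flow is $G(\vv)$ and the $r$-derivative of the right-hand side equals $\partial_1 \mathcal{E}_G(s,\vv) = G(\mathcal{E}_G(s,\vv))$. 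Equivalently, $\partial_2 \mathcal{E}_G(s,\vv)\cdot G(\vv)$ solves the variational (first-order) equation of the $G$-flow with initial datum $G(\vv)$, and $G(\mathcal{E}_G(s,\vv))$ solves the same linear equation with the same initial value, so the two agree by uniqueness. With this cancellation, $\phi'(\tau) = \partial_2 \mathcal{E}_G(t-\tau, \mathcal{E}_H(\tau,\uu_0))\cdot R(\tau, \mathcal{E}_H(\tau,\uu_0))$, and integrating over $[0,t]$ delivers the formula. Throughout, the manipulations --- the chain rule, the interchange of the $\tau$-derivative with the flow maps, and the variational identity --- are justified by the standing smoothness assumptions on $G$, $R$, and the flows $\mathcal{E}_G$, $\mathcal{E}_H$ made earlier in the excerpt.
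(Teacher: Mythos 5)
Your proof is correct. The paper does not actually prove this lemma --- it imports it from the cited references --- but your argument (the interpolating curve $\phi(\tau)=\mathcal{E}_G(t-\tau,\mathcal{E}_H(\tau,\uu_0))$, the chain rule, and the commutation identity $\partial_2\mathcal{E}_G(s,\vv)\cdot G(\vv)=G(\mathcal{E}_G(s,\vv))$ obtained from the semigroup property of the autonomous $G$-flow) is exactly the standard derivation of the Gr\"obner--Alekseev formula, and every step, including the cancellation of the two $G$-terms, is sound under the smoothness assumptions in force.
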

Applying Lemma \ref{GAlemma} to (\ref{eq:nonlinearODE}) and (\ref{eq:defect}) yields the following formula for the local error (\ref{eq:LEe}),
\begin{align}
\label{eq:LEerror}
\LLL(\dt, \uu_n,\chi) &= \int_{0}^{\dt} \partial_2 \mathcal{E}\left(\dt - \tau, {\Phi}(\tau, \uu_n, \chi) \right) \cdot \mathcal{R}(\tau, \uu_n, \chi)\, \mathrm{d}\tau =: \int_0^{\Delta t} \Theta(\tau,\uu_n,\chi)\,\mathrm{d}\tau.
\end{align}
The following theorem shows that if (\ref{eq:solver}) is a method of order $p$, then the quantity
\begin{equation}\label{estim}
L(\dt,\uu_n,\chi)=\frac{\dt}{p+1}\mathcal{R}(\dt,\uu_n,\chi),
\end{equation}
is an asymptotically correct estimator of the local truncation error (\ref{eq:LEe}), uniformly for $\chi\in\Omega$.
\begin{theorem}\label{defproof}
The error estimate (\ref{estim}) is such that
$$\|L(\dt,\uu_n,\chi)-\LLL(\dt,\uu_n,\chi)\|\leq C\dt^{p+2},$$
uniformly for any value $\chi\in\Omega,$ i.e. with $C$ independent of $\chi$.
\end{theorem}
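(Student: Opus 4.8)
The plan is to Taylor expand both $L(\dt,\uu_n,\chi)$ and $\LLL(\dt,\uu_n,\chi)$ in powers of $\dt$, show that their leading $\dt^{p+1}$ contributions are identical, and conclude that the remainder is $\O{\dt^{p+2}}$. Throughout I fix $\uu_n$ and treat every object as a smooth function of $(\dt,\chi)$ on the compact set $[0,\dt_0]\times\Omega$. As a first step I record the meaning of the order-$p$ hypothesis: since $\Phi$ and $\mathcal{E}$ are smooth and order $p\geq 1$ forces $\Phi(0,\uu_n,\chi)=\uu_n=\mathcal{E}(0,\uu_n)$ together with the vanishing of the first $p+1$ Taylor coefficients of $\LLL$ in $\dt$, a Taylor expansion gives
\[
\LLL(\dt,\uu_n,\chi)=\Psi(\uu_n,\chi)\,\dt^{p+1}+\O{\dt^{p+2}},
\]
where $\Psi$ denotes the (smooth) principal error function of the scheme.

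The crux is to express the defect in terms of this same $\Psi$. Regarding $\LLL(\tau,\uu_n,\chi)=\Phi(\tau,\uu_n,\chi)-\mathcal{E}(\tau,\uu_n)$ as a function of $\tau$ and using $\mathcal{D}_{\tau}\mathcal{E}(\tau,\uu_n)=A(\mathcal{E}(\tau,\uu_n))$, I would write $\mathcal{D}_{\tau}\Phi=A(\mathcal{E})+\mathcal{D}_{\tau}\LLL$, substitute this into the definition \R{eq:defect} of the residual, and Taylor expand $A$ about $\mathcal{E}(\tau,\uu_n)$ to obtain
\[
\mathcal{R}(\tau,\uu_n,\chi)=\mathcal{D}_{\tau}\LLL(\tau,\uu_n,\chi)-\partial_1 A(\mathcal{E}(\tau,\uu_n))\,\LLL(\tau,\uu_n,\chi)+\O{\|\LLL(\tau,\uu_n,\chi)\|^2}.
\]
Because $\LLL=\O{\tau^{p+1}}$, the last two terms are $\O{\tau^{p+1}}$ and $\O{\tau^{2p+2}}$, while differentiating the expansion of $\LLL$ gives $\mathcal{D}_{\tau}\LLL=(p+1)\Psi\,\tau^{p}+\O{\tau^{p+1}}$. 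Hence the leading behaviour of the defect is $\mathcal{R}(\dt,\uu_n,\chi)=(p+1)\Psi(\uu_n,\chi)\,\dt^{p}+\O{\dt^{p+1}}$.

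Combining the two expansions is then immediate: by \R{estim},
\[
L(\dt,\uu_n,\chi)=\frac{\dt}{p+1}\mathcal{R}(\dt,\uu_n,\chi)=\Psi(\uu_n,\chi)\,\dt^{p+1}+\O{\dt^{p+2}},
\]
so the normalisation $\dt/(p+1)$ is precisely what makes the $\dt^{p+1}$ term of $L$ match that of $\LLL$; subtracting, the principal terms cancel and $L-\LLL=\O{\dt^{p+2}}$. To upgrade this to the uniform bound claimed, I would note that $\Psi$ and each remainder above are produced by Taylor expanding the smooth maps $A$, $\mathcal{E}$, $\Phi$ and their derivatives; restricting $(\dt,\chi)$ to the compact set $[0,\dt_0]\times\Omega$ lets me bound every remainder by the supremum of the relevant derivatives over this set, yielding a finite constant $C$ that is independent of $\chi$.

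The step I expect to be the main obstacle is the second one, namely pinning down the exact factor $(p+1)$ that links the leading coefficient of the defect $\mathcal{R}$ to the principal error function $\Psi$ of the local error, since this is what forces the cancellation rather than merely giving matching orders. An alternative route to the same expansion of $\mathcal{R}$, more in keeping with the Gr\"obner--Alekseev formula \R{eq:LEerror}, is to differentiate $\LLL(\dt)$ in $\dt$: the boundary term produces $\partial_2\mathcal{E}(0,\cdot)\,\mathcal{R}(\dt)=\mathcal{R}(\dt)$ because $\partial_2\mathcal{E}(0,\cdot)$ is the identity, while the remaining integral is controlled by $\O{\dt^{p+1}}$ once $\mathcal{R}(\tau)=\O{\tau^{p}}$ is known; the delicate bookkeeping of orders is identical either way.
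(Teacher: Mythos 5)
Your proof is correct, but it reaches the conclusion by a genuinely different route from the paper. The paper invokes the Gr\"obner--Alekseev formula (Lemma~\ref{GAlemma}) to write $\LLL(\dt,\uu_n,\chi)=\int_0^{\dt}\Theta(\tau,\uu_n,\chi)\,\mathrm{d}\tau$ with the boundary identity $\Theta(\dt,\cdot)=\mathcal{R}(\dt,\cdot)$, Taylor expands the \emph{integrand} $\Theta$ to order $p$ with Lagrange remainder, and obtains the cancellation from the elementary identity $\int_0^{\dt}\frac{\tau^p}{p!}\,\mathrm{d}\tau=\frac{\dt}{p+1}\cdot\frac{\dt^p}{p!}$; this yields an explicit constant $C=\frac{2p+3}{(p+2)!(p+1)}\widetilde{C}$. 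You instead work directly with $\LLL=\Phi-\mathcal{E}$, linearize $A$ about the exact flow to get $\mathcal{R}=\mathcal{D}_\tau\LLL-\partial_1 A(\mathcal{E})\LLL+\O{\|\LLL\|^2}$, and extract the factor $p+1$ by \emph{differentiating} the expansion $\LLL=\Psi\,\tau^{p+1}+\O{\tau^{p+2}}$ — the two arguments are dual (integrate the expansion of the defect versus differentiate the expansion of the local error), and the step you flagged as the main obstacle is in fact sound, since smoothness of $\LLL$ in $\tau$ lets you apply Taylor's theorem with remainder to $\mathcal{D}_\tau\LLL$ itself. What your route buys is independence from the variation-of-constants representation, which the paper only asserts for nonlinear parabolic and time-reversible equations, so your argument applies to any smooth semidiscretization; what it gives up is the clean explicit constant and the direct link to the defect-correction framework of \R{eq:LEerror}. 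The uniformity argument via compactness of $[0,\dt]\times\Omega$ is the same in both proofs.
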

\begin{proof}
Since the method is of order $p$, then $\LLL(\dt,\uu_n,\chi)=\mathcal{O}(\dt^{p+1}),$ and $\Theta(\tau,\uu_n,\chi)=\mathcal{O}(\tau^{p})$ for any $\Delta t,$ $\tau\in[0,\dt]$, and $\chi\in\Omega.$
Taylor expanding $\Theta$ in $\tau$ around $0$, there exists $\xi_1 \in [0,\tau]$ such that
\begin{equation}
    \label{eq:TaylorTheta}
        \Theta(\tau,\uu_n,\chi) = \frac{\tau^p}{p!}\partial_1^p\Theta(0,\uu_n,\chi)+\frac{\tau^{p+1}}{(p+1)!}\partial_1^{p+1}\Theta(\xi_1,\uu_n,\chi).
\end{equation}
Therefore,
\begin{align}
\nonumber \LLL(\dt,\uu_n,\chi)&=\int_0^{\Delta t}\Theta(\tau,\uu_n,\chi)\,\mathrm{d}\tau \\
\label{eq:LLLexpansion}& = \frac{\dt^{p+1}}{(p+1)!}\partial_1^p\Theta(0,\uu_n,\chi) + \int_0^{\Delta t}\frac{\tau^{p+1}}{(p+1)!}\partial_1^{p+1}\Theta(\xi_1,\uu_n,\chi)\,\mathrm{d}\tau \, .
\end{align}
Due to smoothness of $\Theta$, $\partial_1^{p+1}\Theta$ is continuous and is bounded over the compact set $[0,\Delta t] \times \Omega$, so that we may define
$$\widetilde{C}=\max_{(\xi,\chi)\in[0,\dt]\times\Omega}\left\|\partial_1^{p+1}\Theta(\xi,\uu_n,\chi) \right\| < \infty.$$

Note that by definition of $\Theta$ in (\ref{eq:LEerror}), $\Theta(\dt,\uu_n,\chi) = \partial_2 \mathcal{E}\left(0, {\Phi}(\dt, \uu_n, \chi) \right) \cdot \mathcal{R}(\dt, \uu_n, \chi) = \mathcal{R}(\dt,\uu_n,\chi)$.
Using (\ref{eq:LLLexpansion}) and applying (\ref{eq:TaylorTheta}) with $\tau = \Delta t$, for some $\xi_1,\xi_2 \in [0,\Delta t]$,
\begin{align*}
& \left\|\LLL(\dt,\uu_n,\chi) - \frac{\dt}{p+1}\mathcal{R}(\dt,\uu_n,\chi)\right\|  = \left\|\LLL(\dt,\uu_n,\chi) - \frac{\dt}{p+1}\Theta(\dt,\uu_n,\chi)\right\|\\
& \qquad \qquad =  \left\|
\int_0^{\Delta t}\frac{\tau^{p+1}}{(p+1)!}\partial_1^{p+1}\Theta(\xi_1,\uu_n,\chi)\,\mathrm{d}\tau
-\frac{\Delta t^{p+2}}{(p+1)!(p+1)}\partial_1^{p+1}\Theta(\xi_2,\uu_n,\chi)
\right\|\\
& \qquad \qquad \leq  \left\|
\int_0^{\Delta t}\frac{\tau^{p+1}}{(p+1)!} \partial_1^{p+1}\Theta(\xi_1,\uu_n,\chi)\,\mathrm{d}\tau \right\| +
\left\|\frac{\Delta t^{p+2}}{(p+1)!(p+1)}\partial_1^{p+1}\Theta(\xi_2,\uu_n,\chi)
\right\|\\
& \qquad \qquad \leq \left[\frac{1}{(p+2)!}+\frac{1}{(p+1)!(p+1)}\right]\widetilde{C}\dt^{p+2} =: C\dt^{p+2},
\end{align*}
where $$C=\frac{2p+3}{(p+2)!(p+1)}\widetilde{C},$$
is independent of $\dt$ and $\chi$. Therefore, indeed
$$\|L(\dt,\uu_n,\chi)-\LLL(\dt,\uu_n,\chi)\|\leq C\dt^{p+2},$$
uniformly for any value of $\chi\in\Omega$.
\end{proof}

\label{DefSec}

\section{Defect based identification of optimal parameters}\label{General}
In this section, we propose the use of the defect based error estimate (\ref{estim}) for finding, at every time-step, optimal parameters $\chi_n^* \in \Omega \subset \mathbb{R}^K$, defined as
\begin{equation}
\label{eq:opt}
\chi_n^* = \argmin_{\chi \in \Omega} \norm{2}{L(\dt,\uu_n,\chi)},
\end{equation}
where $\uu_n$ and $\dt$ are fixed. The result of Theorem \ref{defproof} assures us that
 $$\mathcal{L}(\dt,\uu_n,\chi_n^*) = L(\dt,\uu_n,\chi_n^*) + C{\dt^{p+2}},$$ and guarantees that the choice of parameters $\chi_n^*$ keeps the true local error, $\mathcal{L}$, close to $L(\dt,\uu_n,\chi_n^*)$ in the asymptotic limit $\dt \go 0$ and, therefore, small.

\begin{remark}\label{parvstime}
The application of defect based error estimates for choosing optimal parameters differs from their application in context of time-adaptivity in a couple of crucial aspects.
\begin{enumerate}
\item Since $L$ is neither monotonous in $\chi$, nor are we interested in asymptotic limits for small $\chi$ (unlike the case of $\dt$ in context of time-adaptivity), the defect based estimate $L(\dt,\uu_n,\chi)$ needs to be computed for multiple values of $\chi$ within an optimization routine.
\item The perturbation of $L$ by a term $\rho$ independent of $\chi$ has no effect on $\chi^*$. This is in contrast to time-adaptivity where we seek the largest $\dt^*$ such that $\norm{2}{L(\dt^*,\uu_n,\chi)} < \delta$ for some user specified tolerance $\delta$, and $\rho\neq 0$ effects the choice of $\dt^*$.
\end{enumerate}
\end{remark}
The first observation in Remark~\ref{parvstime} suggests that the application of defect based estimates for choosing optimal parameters can be prohibitively expensive. However, the second suggests that we can resort to inexpensive approximations of the defect and still hope to arrive at a good choice of parameters.

In Section~\ref{sec:efficiency}, we see that under reasonable assumptions, the number of optimization steps is not expected to be large and just a few steps of Gauss--Newton iteration are ever required. In the large time-step regime the approximation of defect on a coarse spatial grid also proves to be sufficient for the purposes described here. Overall, this leads to a procedure for identifying optimal parameters with very reasonable overheads, producing highly efficient schemes.

\subsection{Optimization procedure}
\label{sec:opt}

In practice, we minimize the square of the defect,
\begin{equation*}
\chi^*_n = \argmin_{\chi \in \Omega} f(\chi), \qquad f(\chi) = \frac12 \norm{2}{\mathcal{R}(\dt,\uu_n,\chi)}^2,
\end{equation*}
using the gradient,
\begin{equation}\label{eq:grad}
    g = \left(\mathcal{D}_\chi \mathcal{R}(\dt,\uu_n,\chi)\right)^\top \mathcal{R}(\dt,\uu_n,\chi),
\end{equation}
where $\mathcal{D}_\chi \mathcal{R}$ is the Jacobian of the defect with respect to $\chi$,
and a Gauss--Newton approximation to the Hessian,
\begin{equation}\label{eq:hess}
    \nabla^2 f \approx H_{\mathrm{GN}} :=  \left(\mathcal{D}_\chi \mathcal{R}(\dt,\uu_n,\chi)\right)^\top \left(\mathcal{D}_\chi \mathcal{R}(\dt,\uu_n,\chi)\right).
\end{equation}
Utilizing the Gauss--Newton Hessian in the context of trust region algorithms yields a sequence of parameters $\chi_n^k$ that quadratically converge to the optimal $\chi^*_n$, with reliable global convergence properties \cite{NoceWrig06}. At the same time, the procedure remains relatively inexpensive for a small number of parameters, $K$, since we only need to compute the first derivatives of the defect with regards to $\chi$. These can be computed either analytically or approximately using finite differences.

The defect, $\mathcal{R}(\dt, \uu_n, \chi_n^k)$, is computed using \R{eq:defect}. This requires the computation of a temporary solution, $\widetilde{\uu}_{n+1}^k = \Phi(\dt, \uu_n, \chi_n^k)$, and of $\mathcal D_{\dt} \Phi(\dt,\uu_n, \chi_n^k)$, the latter of which can be computed analytically as outlined with concrete examples in Section \ref{S3framework}.

Note that, in general, at each iteration a trust region algorithm may be used to compute $\mathcal{R}$ at candidate parameters $\chi=\widetilde{\chi}_n^{k+1}$ before deciding to accept or reject the candidate and/or update the trust region radius $\Delta_n^k$. For a detailed introduction to trust region algorithms, we refer the reader to \cite{NoceWrig06}.

\subsection{Practical considerations for efficiency}\label{sec:efficiency}

The evaluations of defect can be very expensive, as they require the computation of the temporary solution $\widetilde{\uu}_{n+1}^k$ at every iteration. Each of these is as expensive as a step of the original numerical solver $\Phi$.
However, in practice we identify $\chi_n^*$ by optimizing the defect on a coarse spatial grid, resorting to the fine computational grid only for evaluating $\uu_{n+1}$ once $\chi_n^*$ has been identified.

The coarse grid is obtained as a subgrid of the fine grid with resolution $r \dx$, with $r$ an integer that divides $M+1$. Let $\mathcal{P}_r$ denote the projection operator from the fine grid to the coarse grid.
At the $k$-th iteration of Gauss-Newton algorithm, we evaluate the defect (\ref{eq:defect}) on the coarse grid, as
$$\mathcal{R}(\dt, \widehat{\uu}_n,\chi_n^k)=\mathcal{D}_{\dt} \Phi(\dt, \widehat{\uu}_n, \chi_n^k) - A(\Phi(\dt, \widehat{\uu}_n,  \chi_n^k)),\qquad \widehat{\uu}_n=\mathcal{P}_r(\uu_n).$$ This requires the computation of $\widetilde{\uu}_{n+1}^k = \Phi(\dt, \widehat{\uu}_n, \chi_n^k)$. On a grid with resolution $r\dx$, the dimension of the problem is reduced by a factor $r$. This typically leads to a significant speedup in the computation of $\chi_n^*$.
This speedup is expected to be particularly pronounced in 2 or 3 dimensional problems, where the coarse grid is smaller by factors of $r^2$ and $r^3$, respectively, than the finer computational grid.

For a method of order $p$ in space and time, a $\mathcal{O}(r^p\dx^p)$ term of error is introduced in the evaluation of the defect. This error is negligible if $r\dx \ll \dt$ and is not expected to have a significant effect on the estimate of the optimal parameters $\chi_n^*$ in light of Remark~\ref{parvstime}.

\begin{remark}\label{remK}
With larger values of $r$, the additional cost of identifying $\chi_n^*$ becomes marginal, while the advantages of identifying good parameters could still be significant.
We can expect, however, that once $r$ is large enough such that $r\dx \ll \dt$ is no longer valid, spatial discretization errors will start to dominate and the computation of defect may become too inaccurate to be useful. In light of these observations, as a rule of thumb, the largest $r$ we recommend is the largest divisor of $M+1$ such that $r<\dt/\dx$.
\end{remark}

Further gains can be obtained by exploiting temporal smoothness of the optimal parameters $\chi^*$. If the solution $\uu(t)$ is smooth in time, it is reasonable to assume that the optimal parameters are described by a Lipschitz function $\chi^*(t)$, i.e. $|\chi^*_{n} - \chi^*_{n-1}| \leq \widetilde{C} \dt$ for some $\widetilde{C}<\infty$ independent of $n$. For small enough $\widetilde{C}\dt$, $\chi^*_{n-1}$ is close enough to $\chi^*_{n}$. Thus, the previous value of the optimal parameter serves as a good first guess for the next time-step, $\chi^{0}_{n} = \chi^*_{n-1}$. With $\widetilde{C}$ and $\dt$ small enough, $\chi^{0}_{n}$ can be expected close enough to $\chi^{*}_{n}$ so that conditions of trust-region are guaranteed to be satisfied and quadratic convergence is guaranteed. In such a case, it suffices to use the simple Newton-type iteration,
\begin{equation}
\label{eq:iter}
 \chi^{k+1}_{n} = \chi^{k}_{n} - H_{\mathrm{GN}}^{-1}\ g, \qquad \chi^0_n = \chi^*_{n-1}, \quad n>0,
\end{equation}
in place of the trust-region algorithm, where $g$ and $H_{\mathrm{GN}}$ are given by \R{eq:grad} and \R{eq:hess}, respectively. In practice, just a couple of steps of \R{eq:iter} suffice, with the exception of the first iteration ($n=0$), when the arbitrary value of the initial guess may be very far from the optimal one, requiring more optimization steps and the use of trust-regions.

Gauss-Newton algorithm is iterated until the stopping criterion,
$$|\chi_n^{k+1}-\chi_n^{k}|<{\rm tol},$$
is satisfied for a suitable tolerance. Then we set ${\chi}^*_n = \chi_n^{k+1}$ and the solution at the next time-step is obtained on the finer computational grid as
$${\uu}_{n+1} = \Phi(\dt, {\uu}_n, {\chi}^*_n).$$
The overall procedure introduced in this section is summarized in Algorithm~\ref{General}.\ref{alg:opt}.

\begin{minipage}{.9\linewidth}
\begin{algorithm}[H]
\label{alg:opt}
\SetAlgoLined
\SetKwInOut{Input}{input}
\SetKwInOut{Output}{output}
\Output{$\uu_0,\uu_1,\ldots,\uu_N$, $\chi_0^*, \chi_1^*, \ldots, \chi_{N-1}^*$.}
\Input{$\chi_0^0 \in \Omega, \uu_0, \Phi, \dt, N, \Delta^0_0, \mathrm{MaxIter}, \mathrm{tol}_1, \mathrm{tol}_2, r$.}
\For{$n \gets 0$ \KwTo $N-1$}
    {
    $\widehat{\uu}_n=\mathcal{P}_r(\uu_n)$\;
    $k\gets 0$\;
        \While{$k \leq \mathrm{MaxIter}$  {\bf and} $|\chi_n^{k+1}-\chi_n^k|> \mathrm{tol}$ }
        {
        $\mathcal{R} \gets \mathcal{R}(\dt, \widehat{\uu}_n, \chi_n^k)$\;
        $\mathcal{D}_\chi \mathcal{R}\gets \mathcal{D}_\chi \mathcal{R}(\dt, \widehat{\uu}_n, \chi_n^k)$\;
        $g \gets \left(\mathcal{D}_\chi \mathcal{R}\right)^\top \mathcal{R}$\;
        $H_{\mathrm{GN}} \gets \left(\mathcal{D}_\chi \mathcal{R}\right)^\top \left(\mathcal{D}_\chi \mathcal{R}\right)$\;
        $\chi_n^{k+1}, \Delta_n^{k+1} \gets {\bf TrustRegion}(\chi_n^k, \Delta_n^k, g, H_{\mathrm{GN}}, \widehat{\uu}_n, \Phi)$\;
        $k\gets k+1$\;
        }
    $\chi_n^* \gets \chi_n^k$\;
    $\uu_{n+1} \gets \Phi(\dt, \uu_n, \chi_n^*)$\;
    $\chi_{n+1}^0 \gets \chi_n^*$\;
    $\Delta_{n+1}^0 \gets \Delta_n^{k}$\;
    }
 \caption{Adaptively identifying optimal parameters}
\end{algorithm}
\end{minipage}

\subsection{Modifications for conservative schemes}\label{sec:def4cons}
Algorithm~\ref{General}.\ref{alg:opt} is very effective in improving accuracy of standard numerical methods with free parameters and of geometric integrators that preserve a structure that is independent of the parameters. However, the schemes described in \cite{frasca} preserve some conservation laws that depend on the parameters. Therefore, a time adaptive approach undermines the preservation of these conservation laws.

In order to obtain numerical solutions that satisfy the physical constraints given by these conservation laws, we modify Algorithm~\ref{General}.\ref{alg:opt} as follows.
\begin{enumerate}
\item Project the initial condition on a grid with resolution $r\dx$.
\item Find the optimal $\chi_0^*$ and use it to advance a step in time on the coarse grid. Iterate this step till the final time, obtaining the optimal parameters $\chi_n^*$ at $n=0,\ldots,N-1$ steps.
\item Compute the average optimal parameters as $\overline{\chi}^* = \frac{1}{N} \sum_{n=0}^{N-1} \chi_n^*$.
\item Compute the numerical solution on the full computational grid using the average parameters, $\uu_{n+1} = \Phi(\dt, \uu_n, \overline{\chi}^*)$, for $n=0,\ldots,N-1$.
\end{enumerate}
We summarize this conservative version of our procedure in Algorithm~\ref{General}.\ref{alg:opt2}.
Note that the numerical solution obtained on the coarse grid, $\widehat{\uu}_n$, is used only for estimating the optimal parameters and later discarded.

\begin{minipage}{.9\linewidth}
\begin{algorithm}[H]
\label{alg:opt2}
\SetAlgoLined
\SetKwInOut{Input}{input}
\SetKwInOut{Output}{output}
\Output{$\uu_0,\uu_1,\ldots,\uu_N$, $\chi_0^*, \chi_1^*, \ldots, \chi_{N-1}^*$.}
\Input{$\chi_0^0 \in \Omega, \uu_0, \Phi, \dt, N, \Delta^0_0, \mathrm{MaxIter}, \mathrm{tol}_1, \mathrm{tol}_2, r$.}
$\widehat{\uu}_0\gets \mathcal{P}_r(\uu_0)$\;
\For{$n \gets 0$ \KwTo $N-1$}
    {
    $k\gets 0$\;
        \While{$k \leq \mathrm{MaxIter}$  {\bf and}  $|\chi_n^{k+1}-\chi_n^k|> \mathrm{tol}$ }
        {
        $\mathcal{R} \gets \mathcal{R}(\dt, \widehat{\uu}_n, \chi_n^k)$\;
        $\mathcal{D}_\chi \mathcal{R}\gets \mathcal{D}_\chi \mathcal{R}(\dt, \widehat{\uu}_n, \chi_n^k)$\;
        $g \gets \left(\mathcal{D}_\chi \mathcal{R}\right)^\top \mathcal{R}$\;
        $H_{\mathrm{GN}} \gets \left(\mathcal{D}_\chi \mathcal{R}\right)^\top \left(\mathcal{D}_\chi \mathcal{R}\right)$\;
        $\chi_n^{k+1}, \Delta_n^{k+1} \gets {\bf TrustRegion}(\chi_n^k, \Delta_n^k, g, H_{\mathrm{GN}}, \widehat{\uu}_n, \Phi)$\;
        $k\gets k+1$\;
        }
    $\chi_n^* \gets \chi_n^k$\;
    $\widehat{\uu}_{n+1} \gets \Phi(\dt, \widehat{\uu}_n, \chi_n^*)$\;
    $\chi_{n+1}^0 \gets \chi_n^*$\;
    $\Delta_{n+1}^0 \gets \Delta_n^{k}$\;
    }
$\overline{\chi}^* \gets \frac{1}{N} \sum_{n=0}^{N-1} \chi_n^*$\;
\For{$n \gets 0$ \KwTo $N-1$}
    {
    $\uu_{n+1} \gets \Phi(\dt, \uu_n, \overline{\chi}^*)$\;
    }
 \caption{Identifying fixed optimal parameters}
\end{algorithm}
\end{minipage}
\vspace{10pt}

Since our estimate of the optimal parameters in Algorithm~\ref{General}.\ref{alg:opt2} relies on the solution of the problem on a coarser grid, this algorithm is also affected by the accumulation of errors in space, which may be particularly pronounced for large values of $r$. On the other hand, the parameters $\chi_n^*$ need not be identified with too high an accuracy since we are only interested in a good average choice, $\overline{\chi}^*$.

Considering the average parameters for solving the problem on the fine grid is a good option particularly for solutions with simple and smooth dynamics (e.g. travelling waves) as the values obtained at the different time-step are all reasonably close to each other.

\begin{remark}
Note that at the end of step 2, the full sequence of optimal parameters, $\chi_0^*, \ldots, \chi_{N-1}^*$, and local error estimates are available to the user, and it is possible to consider alternative options than the average to derive different single fixed values of the parameters.
\end{remark}

\section{Approximation of defect for specific schemes}\label{S3framework}
In this section we
consider the application of the proposed approach to two partial differential equations --
the KdV equation and a nonlinear heat equation -- with suitable initial and boundary conditions.
In particular, we present the computation of defect (\ref{eq:defect})
for numerical schemes introduced in \cite{frasca}, which is required
in Algorithms~\ref{General}.\ref{alg:opt} and \ref{General}.\ref{alg:opt2}.

We define the forward shifts in space and time,
$$S_{\dx}(u_{m,n})=u_{m+1,n},\qquad S_{\dt}(u_{m,n})=u_{m,n+1},$$
respectively, the forward difference and average operators,
$${D}_{\Delta x}=\frac{S_{\dx}-I}{\Delta x},\qquad {D}_{\Delta t}=\frac{S_{\dt}-I}{\Delta t},\qquad \mu_{\Delta x}=\frac{S_{\dx}+I}{2},\qquad \mu_{\Delta t}=\frac{S_{\dt}+I}{2},$$
and the centred difference operator,
$$D_{2k,\Delta x}=D_{\dx}^{2k}S_{\dx}^{-k},\qquad D_{2k-1,\Delta x}=D_{\dx}^{2k-1}S_{\dx}^{-k}\mu_{\dx},$$ approximating the space derivatives of degree $2k$ and $2k-1$, respectively, with second order of accuracy. Action of these operators on vectors is defined entrywise. Moreover, we denote with $\circ$ the Hadamard product whose action is entrywise multiplication of vectors.

For the two equations considered here, families of second order finite difference methods that depend on one or more free parameters have been introduced in \cite{frasca}. We consider in this section some of these families of schemes.
\begin{remark}\label{parsize}
We restrict attention to the setting where $\dt>\dx$, and the parameters $\chi$ featured in the numerical schemes from \cite{frasca} are $\mathcal{O}(\Delta t^2)$. These small parameters $\chi$ correspond to perturbation terms that have no counterpart in the continuous problem, and whose contributions vanish in the limit $\Delta t \go 0$. Thus, it is reasonable to restrict our search to a neighbourhood of $\mathbf{0}$ of size $\mathcal{O}(\Delta t^2)$, $\Omega\subset \overline{B}_{C\Delta t^2}(\mathbf{0};\mathbb{R}^K)$, for some constant $C$.  Therefore, when applying the algorithms outlined in Section~\ref{General} we can expect that $\chi=\mathbf{0}$ is a reasonable initial guess, and we can find convenient values of the parameters by using the simple iteration (\ref{eq:iter}) without the aid of trust region methods.
\end{remark}
\begin{remark}\label{Jacobrem}
The schemes considered here are all implicit. We assume that $\mathcal{J}$, the Jacobian operator defined by the partial derivatives of the scheme with respect to $\uu_{n+1}$, is never singular. Solutions can then be obtained by iteration of Newton's method.
\end{remark}
An important property of the numerical schemes considered here is that they preserve some conservation laws. Conservation laws are defined as total divergences,
$$\DDD_x  F + \DDD_t G,$$
that vanish on solutions of the PDE. The functions $F$ and $G$ are the flux and the density of the conservation law and depend on $x,t,u$ and its derivatives. The methods in \cite{frasca} preserve second order approximations of specific conservation laws in the form
$$D_{\dx}  \widetilde F(\xx, \uu_n, \uu_{n+1},\chi) + D_{\dt} \widetilde G(\xx, \uu_n, \chi)=0,$$
where here and henceforth tildes represent approximations of the corresponding continuous terms, and $\xx$ is the column vector whose $m$-th entry is $x_m$.

\subsection{Schemes for KdV equation}\label{KdV1}
In this section we apply the approach outlined in Section~\ref{General} to parametric families of schemes for the KdV equation,
\begin{equation}
\label{eq:nonlinearPDEex2}
u_t + \left(\tfrac{1}2u^2+u_{xx}\right)_x=0,
\end{equation}
with initial condition $$u(x,0)=u_0(x).$$ For simplicity, we restrict attention to periodic or zero boundary conditions. However, as shown in Section~\ref{NLH}, the entire discussion can also be adapted to boundary conditions of a different type.
The KdV equation has infinitely many independent conservation laws. The first three, in increasing order,
\begin{align}\label{eq:massCL}
&\DDD_xF_1+\DDD_tG_1\equiv\DDD_x\left(\tfrac{1}2u^2+u_{xx}\right)+\DDD_tu=0,\\\nonumber
&\DDD_xF_2+\DDD_tG_2\equiv\DDD_x\left(\tfrac{1}3u^3+uu_{xx}-\tfrac{1}2u_x^2\right)+\DDD_t\left(\tfrac{1}2u^2\right)=0,\\\nonumber
&\DDD_xF_3+\DDD_tG_3\equiv\DDD_x\left(\tfrac{1}4u^4+u_xu_t-uu_{xt}+u^2u_{xx}+u^2_{xx}\right)+\DDD_t\left(\tfrac{1}3u^3+uu_{xx}\right)=0,
\end{align}
describe the local conservation laws of mass, momentum and energy, respectively.
For this equation we define the semidiscrete operator $A$ in (\ref{eq:nonlinearODE}) in the most natural way, as
\begin{equation}\label{eq:Aop}
A(\uu(t))=-D_{1,\dx}\left(\tfrac{1}{2}\uu^2(t)+D_{2,\dx}\uu(t)\right).
\end{equation}
The results obtained are independent of the particular form of this operator under spatial discretization, as we work under the assumption that the leading source of error is given by the time integration.

\subsubsection*{Energy conserving methods}
We consider here the following family of mass and energy-conserving methods described in \cite{frasca},
\begin{equation}\label{ECschemes}
\text{EC}(\alpha)\equiv D_{\dt}(\uu_n)+D_{\dx}\left(S_{\dx}^{-1}\mu_{\dx}\psi(\uu_n,\uu_{n+1},\alpha)\right)=0,
\end{equation}
where
$$\psi(\uu_n,\uu_{n+1},\alpha)=\tfrac{1}6(\uu_{n+1}^2+\uu_n^2+\uu_n\circ\uu_{n+1})+D_{2,\dx}\mu_{\dt}\uu_n+\alpha D_{\dt}D_{1,\dx}\uu_n.$$
The schemes EC$(\alpha)$ have a discrete version of the conservation law of the mass (\ref{eq:massCL}) given by their definition (\ref{ECschemes}) with,
\begin{equation*}
\widetilde F_1=S_{\dx}^{-1}\mu_{\dx}\psi(\uu_n,\uu_{n+1},\alpha),\qquad \widetilde G_1=\uu_n.
\end{equation*}
Moreover, they possess the discrete energy conservation law,
\begin{align}\label{EnergyEC}
D_{\dx}&\big(\widetilde F_3\big)+D_{\dt}\big(\widetilde G_3\big)=0,\\\nonumber
\widetilde F_3=&\,\psi(\uu_n,\uu_{n+1},\alpha)\!\circ\! S_{\dx}^{-1}\psi(\uu_n,\uu_{n+1},\alpha)+\alpha (D_{\dt}\uu_n)\circ (S_{\dx}^{-1}D_{\dt}\uu_n)\\\nonumber
&+S_{\dx}^{-1}\big((D_{\dx}\mu_{\dt}\uu_n)\circ (D_{\dt}\mu_{\dx}\uu_n)-(\mu_{\dx}\mu_{\dt}\uu_n)\circ (D_{\dx}D_{\dt}\uu_n)\big),\\\nonumber
\widetilde G_3=&\,\tfrac{1}3\uu_n^3+\uu_n\circ D_{2,\dx}\uu_n.
\end{align}
Being implicit methods, an explicit expression for $\Phi$ in \R{eq:solver} is not available. Nevertheless, an analytical expression of its time derivatives can be obtained by substituting \R{eq:solver} in (\ref{ECschemes}) and differentiating. This yields,
\begin{align}\label{eq:DtEC}
\DDD_{\dt} \Phi(\dt,\uu_n,\alpha) = -[\dt\mathcal{J}]^{-1}(D_{1,\dx}\psi(\uu_n,\uu_{n+1},0)),
\end{align}
where $\uu_{n+1}=\Phi(\dt,\uu_n,\alpha)$,  and $\mathcal{J}$ denotes the Jacobian matrix defined in Remark \ref{Jacobrem}.

Optimal values of $\alpha$ are then computed according to \R{eq:opt}, where the defect is calculated according to (\ref{eq:defect}) with (\ref{eq:Aop}) and (\ref{eq:DtEC}).

\begin{remark}\label{remEC}
As noted in Section~\ref{sec:def4cons}, since the value of $\alpha$ changes at every time-step in Algorithm~\ref{General}.\ref{alg:opt}, it cannot preserve the conservation laws (\ref{ECschemes}) and (\ref{EnergyEC}) of EC($\alpha$) because they depend on $\alpha$. However, since the boundary conditions are conservative, summing the entries of the vectors in (\ref{ECschemes}) and (\ref{EnergyEC}) yields
$$\sum_m u_{m,n+1}=\sum_m u_{m,n},\quad \sum_m \left(\tfrac{1}3 u_{m,n+1}^3+u_{m,n+1} D_{2,\dx}u_{m,n+1}\right)=\sum_m \left(\tfrac{1}3 u_{m,n}^3+u_{m,n} D_{2,\dx}u_{m,n}\right).$$
Therefore, EC($\alpha$) also preserves the following approximations of the global mass and energy:
$$\dx\sum_m u_{m,n},\qquad \dx\sum_m \left(\tfrac{1}3 u_{m,n}^3+u_{m,n} D_{2,\dx}u_{m,n}\right).$$ These two global invariants are independent of $\alpha$, and therefore they are conserved by both algorithms introduced in Section~\ref{General}.
\end{remark}
\subsubsection*{Momentum conserving methods}
A two-parameter family of mass and momentum conserving schemes described in \cite{frasca} is 
\begin{align}\nonumber
\text{MC}(\beta,\gamma)\equiv&\, D_{\dt}(\uu_n)+D_{\dx}\left\{\tfrac{1}6\big((S_{\dx}^{-1}\mu_{\dt}\uu_n)^2+(\mu_{\dt}\uu_n)^2+(S_{\dx}^{-1}\mu_{\dt}\uu_n)\circ(\mu_{\dt}\uu_n)\big)\right.\\\label{MCschemes}
&\left.+D_{\dx}^2S_{\dx}^{-2}\mu_{\dx}\mu_{\dt}\uu_n+D_{\dt}D_{\dx}S_{\dx}^{-1}(\beta\uu_n+\gamma D_{2,\dx}\uu_n)\right\}=0.
\end{align}
Solutions of MC$(\beta,\gamma)$ satisfy the local mass conservation law given by (\ref{MCschemes}), and the local momentum conservation law,
\begin{align}\label{MomMC}
D_{\dx}&\big(\widetilde F_2\big)+D_{\dt}\big(\widetilde G_2\big)=0,\\\nonumber
\widetilde F_2=&\,\tfrac{1}3(\mu_{\dt}\uu_n)\circ(S_{\dx}^{-1}\mu_{\dt}\uu_n)\circ(S_{\dx}^{-1}\mu_{\dt}\mu_{\dx}\uu_n)+\tfrac{1}2(\mu_{\dt}\uu_n)\circ(D_{2,\dx}\mu_{\dt}\uu_n)\\\nonumber
&+\tfrac{1}2(S_{\dx}^{-1}\mu_{\dt}\uu_n)\circ(D_{\dx}^2\mu_{\dt}\uu_n)-\tfrac{1}2(D_{1,\dx}\uu_n)^2+\beta\rho(\uu_n,\uu_{n+1}) +\gamma\sigma(\uu_n,\uu_{n+1}),\\\nonumber
\widetilde G_2=&\,\tfrac{1}2\uu_n\circ\left(\uu_n+D_{2,\dx}(\beta\uu_n+\gamma D_{2,\dx}\uu_n\right),
\end{align}
where
\begin{align*}
\rho(\uu_n,\uu_{n+1})=&\,S_{\dx}^{-1}\left\{(\mu_{\dx}\mu_{\dt}\uu_n)\circ (D_{\dx}D_{\dt}\uu_n)-\tfrac{1}2D_{\dt}\big((\mu_{\dx}\uu_n)\circ(D_{\dx}\uu_n)\big)\right\},\\
\sigma(\uu_n,\uu_{n+1})=&\,\left\{(\mu_{\dt}\uu_n)\circ(D_{\dt}D_{\dx}^3S_{\dx}^{-2}\uu_n)-S_{\dx}^{-1}\big((D_{\dx}\mu_{\dt}\uu_n)\circ(D_{\dt}D_{\dx}^2\uu_n)\big)\right\}\\
&+\tfrac{1}2D_{\dt}\big\{S_{\dx}^{-1}\big((D_{\dx}\uu_n)\circ(D_{\dx}^2\uu_n)\big)-\uu_n\circ(D_{\dx}^3S_{\dx}^{-2}\uu_n)\big\}.
\end{align*}
Note that these schemes are fully implicit.  Substituting \R{eq:solver} in (\ref{MCschemes}) and differentiating in time gives,
\begin{align}\label{eq:DtMC}
\DDD_{\dt} \Phi( \dt,\uu_n,\beta,\gamma) = [\dt \mathcal{J}]^{-1}A(\mu_{\dt}\uu_n).
\end{align}
The defect and the local error estimate \R{estim} are then computed using (\ref{eq:Aop}) and (\ref{eq:DtMC}).
\begin{remark}\label{remMC}
Summation of (\ref{MCschemes}) and (\ref{MomMC}) gives that
$$\dx\sum_m u_{m,n},\qquad \dx\sum_m \left(\tfrac{1}2u_{m,n}(u_{m,n}+\beta D_{2,\dx}u_{m,n}+\gamma D_{4,\dx}u_{m,n})\right),$$
are the discretizations of the global mass and momentum, respectively. The discrete global mass is independent of the parameters, and therefore it is conserved by both  Algorithm~\ref{General}.\ref{alg:opt} and  Algorithm~\ref{General}.\ref{alg:opt2}. However, the discrete global momentum and the local conservation laws (\ref{MCschemes}) and (\ref{MomMC}) are only preserved by Algorithm~\ref{General}.\ref{alg:opt2}.
\end{remark} 
\subsection{Schemes for a nonlinear heat equation}\label{NLH}
In this section we consider the nonlinear heat equation,
\begin{equation}
\label{eq:nonlinearPDEex1}
u_t= \tfrac{1}2( u^2)_{xx},
\end{equation}
with initial condition and Dirichlet boundary conditions,
\begin{equation*}
u(x,0) = u_0(x), \quad u(a,t) = \varphi_L(t), \quad u(b,t) = \varphi_R(t).
\end{equation*}
Equation \R{eq:nonlinearPDEex1} has only two independent conservation laws, \begin{align}\label{eq:CL11}
&\DDD_xF_1+\DDD_tG_1\equiv\DDD_x(-uu_x)+\DDD_t(u)=0, \qquad \DDD_xF_2+\DDD_tG_2\equiv\DDD_x\left(\tfrac{1}2 u^2-xuu_x\right)+\DDD_t(xu)=0.
\end{align}
We denote with CS$(\lambda)$ the one-parameter family of methods described in \cite{frasca}, given by
\begin{align}\label{explNLH}
{D}_{\dt}(\uu_n+\lambda D_{2,\dx}\uu_n)=\tfrac{1}2D_{2,\dx}(\uu_n\circ\uu_{n+1}).
\end{align}
The scheme CS$(\lambda)$ has the following discrete versions of the conservation laws (\ref{eq:CL11}),
\begin{align}\label{NLHCL1}
D_{\dx}&\widetilde F_1+D_{\dt}\widetilde G_1=0,\qquad \widetilde F_1=\,-S_{\dx}^{-1}\left(\tfrac{1}2\uu_n\circ\uu_{n+1}-\lambda D_{\dt}\uu_n\right),\qquad \widetilde G_1=\uu_n,\\\label{NLHCL2}
D_{\dx}&\widetilde F_2+D_{\dt}\widetilde G_2=0,\\\nonumber
\widetilde F_2=&\,S_{\dx}^{-1}\left(\mu_{\dx}(\tfrac{1}2\uu_n\circ\uu_{n+1})-(\mu_{\dx}\xx)\circ D_{\dx}(\tfrac{1}2\uu_n\circ\uu_{n+1})\right),\qquad \widetilde G_2=\xx\circ(\uu_n+\lambda D_{2,\dx}\uu_n).
\end{align}
In order to evaluate the defect, we consider a centred difference space discretization of \R{eq:nonlinearPDEex1} in the form \R{eq:nonlinearODE} with
\begin{equation}\label{eq:Aop1}
A(\uu(t),t)=\tfrac{1}{2}\left(\hat{D}_{2,\Delta x}\uu^2(t)+\varphi_L^2(t)\MM{e}_1+\varphi_R^2(t)\MM{e}_M\right),
\end{equation}
where $\MM{e}_j$ is the $j$-th unit vector, and we define $\hat{D}_{2,\Delta x} = D_{2,\Delta x}\vert_{\varphi_L=\varphi_R=0}$ in order to isolate the contribution of the boundary conditions.
The methods in (\ref{explNLH}) are linearly implicit so they can be written in the form \R{eq:solver} with
\begin{align}
\label{eq:explicitsolverBex1}\nonumber
\Phi(\dt,\uu_n,\lambda) =&\,[\dt \mathcal{J}]^{-1}\big\{(1+\lambda \hat D_{2,\dx})\uu_n +\tfrac{\dt}{2\Delta x^2}\big(\varphi_L(t_{n+1})\varphi_L(t_n)\MM{e}_1+\varphi_R(t_{n+1})\varphi_R(t_n)\MM{e}_M\big)\\
&-\lambda\big((\varphi_L(t_{n+1})-\varphi_L(t_n))\MM{e}_1+(\varphi_R(t_{n+1})-\varphi_R(t_n))\MM{e}_M\big)\big\}.
\end{align}
Differentiating \R{eq:explicitsolverBex1} in time yields,
\begin{align*}\label{eq:DtphiB}
\nonumber \DDD_{\dt} \Phi(&\dt,\uu_n,\lambda) = [\dt \mathcal{J}]^{-1}\big\{\tfrac{1}2\hat D_{2,\dx}(\uu_n \circ \uu_{n+1})-\lambda (\varphi_L'(t_{n+1})\MM{e}_1+\varphi_R'(t_{n+1})\MM{e}_M)\\
&+\tfrac{1}{2\Delta x^2}\big((\varphi_L(t_{n+1})+\dt\varphi_L'(t_{n+1}))\varphi_L(t_n)\MM{e}_1+(\varphi_R(t_{n+1})+\dt\varphi_R'(t_{n+1}))\varphi_R(t_n)\MM{e}_M\big)\big\},
\end{align*}
which, together with \R{eq:Aop1}, is utilized in \R{estim} and \R{eq:defect} to estimate the local error.
\begin{remark}\label{remNLH}
If the boundary conditions are conservative, summing the entries of the vectors in (\ref{NLHCL1}) and (\ref{NLHCL2}) yields
$$\sum_m u_{m,n+1}=\sum_m u_{m,n},\qquad \sum_m x_m(u_{m,n+1}+\lambda D_{2,\dx}u_{m,n+1})=\sum_m x_m(u_{m,n}+\lambda D_{2,\dx}u_{m,n}),$$
giving the following approximations of the global invariants
$$\dx\sum_m u_{m,n},\qquad \dx\sum_m x_m(u_{m,n}+\lambda D_{2,\dx}u_{m,n}).$$
Since the former is independent of $\lambda$, this is a conserved invariant when using either of Algorithm~\ref{General}.\ref{alg:opt} and  Algorithm~\ref{General}.\ref{alg:opt2}. In general, the latter is conserved only by Algorithm~\ref{General}.\ref{alg:opt2}. However, if the boundary conditions are such that
$$\sum_m x_mD_{2,\dx}u_{m,n}=0,$$ the conservation of $$\dx\sum_mx_mu_{m,n}$$ is also guaranteed by Algorithm~\ref{General}.\ref{alg:opt}. On a uniform spatial grid, this is achieved, for example, with zero boundary conditions.
\end{remark}

\section{Numerical examples}\label{numerics}
In this section we consider a range of benchmark problems for the KdV equation (\ref{eq:nonlinearPDEex2}) and the nonlinear heat equation (\ref{eq:nonlinearPDEex1}), and investigate the performance of the methods described in Section~\ref{S3framework} with optimal parameters obtained by the two algorithms introduced in Section~\ref{General}. Comparisons between different numerical schemes are based on:
\begin{itemize}
\item Relative error in the solution at the final time $t=T$, defined as
$$\frac{\|\MM{u}_N-u_{\rm exact}(T)\|}{\|u_{\rm exact}(T)\|},$$
where $\|\cdot\|$ denotes the discrete $L^2$ norm and $u_{\rm exact}$ is the solution of (\ref{eq:nonlinearPDE}).
\item Error in the variation of the global densities. If the method preserves the $k$-th conservation law,
\begin{equation*}
D_{\dx} \widetilde{F}_k(\xx,\uu_n,\uu_{n+1},\chi)+D_{\dt} \widetilde{G}_k(\xx,\uu_n,\chi)=0,
\end{equation*}
the error in the global variation of $G_k$ is defined as
\begin{equation}\label{errcl}
{\rm Err}_k=\dx \max_{n=1,\ldots,N}\left| (\MM{e}_{M+1}-\MM{e}_{1})^T\widetilde{F}_k(\xx,\uu_n,\uu_{n+1},\chi)+\mathbf{1}^T D_{\dt}\widetilde{G}_k\left(\xx,\uu_n,\chi\right)\right|,
\end{equation}
where $\MM{e}_j$ denotes the $j$-th column vector of the standard basis of $\mathbb{R}^{M+1}$, and $\mathbf{1}\in \mathbb{R}^{M+1}$ is the column vector with all entries equal to one.

When the boundary conditions are periodic, we consider instead the maximum error on the $k$-th global invariant, defined as
\begin{equation}\label{errclgl}
{\rm Err}_k=\dx \max_{n=1,\ldots,N}\left| \mathbf{1}^T\left(\widetilde{G}_k\left(\xx, \uu_n,\chi\right)-\widetilde{G}_k\left(\xx, \uu_0,\chi\right)\right)\right|.
\end{equation}
\item Computational cost, measured in terms of computation time in seconds.
\end{itemize}
For each of the family of schemes described in Section~\ref{S3framework}, we consider the following choices of the vector of free parameters:
\begin{itemize}
\item $\chi=\mathbf{0}$, default choice, fixed at each step.
\item $\chi=\chi^*$, globally optimal fixed value that minimizes the solution error for this specific problem. This value is obtained by brute force search based on empirical comparisons with the exact solution and is not available a priori.
    Thus, this choice of parameters does not constitute a reasonable numerical algorithm and we do not provide the computation time for it since it is excessive.
    
\item $\{\chi_{r,n}^*\}_{n\geqslant 0}$, sequence of values that minimize the local error at each time-step obtained using Algorithm~\ref{General}.\ref{alg:opt} projecting on a grid with resolution $r\dx$.
\item $\chi=\overline{\chi}^*_r$, fixed value obtained using Algorithm~\ref{General}.\ref{alg:opt2} with projection on a coarse grid with spatial resolution $r\dx$.
\end{itemize}
As discussed in Remark~\ref{parsize}, we apply both Algorithm~\ref{General}.\ref{alg:opt} and Algorithm~\ref{General}.\ref{alg:opt2} with the simplified Gauss--Newton step (\ref{eq:iter}).

For all the experiments in this section, $\dt$ and $\dx$ are such that $4<\dt/\dx<10$. In order to verify the validity of the observations in Remark~\ref{remK}, we apply the proposed algorithms with $r=1,2,4,10$. 
\subsection{KdV equation}
In this section we solve the KdV equation (\ref{eq:nonlinearPDEex2}), comparing schemes in Section~\ref{KdV1} with different choices of the parameters against schemes known in literature -- namely, the multisymplectic method,
$$D_{\dx}\left\{\tfrac{1}2\mu_{\dx}(\mu_{\dx}\mu_{\dt}u_{m-2,n})^2+D_{2,\dx}\mu_{\dt}u_{m-1,n}\right\}+D_{\dt}\left\{\mu_{\dx}^3u_{m-2,n}\right\}=0,$$ and the narrow box scheme,
$$D_{\dx}\left\{\tfrac{1}2(\mu_{\dt}u_{m-1,n})^2+\mu_{\dt}(D_{2,\dx}u_{m-1,n})\right\}+D_{\dt}\left\{\mu_{\dx}u_{m-1,n}\right\}=0,$$
introduced in \cite{AschMCL1,AschMCL2}. Both of these schemes preserve a discrete conservation law of the mass, given by their definition, but not of the momentum or the energy. These schemes are more compact than those defined in Section~\ref{KdV1} and not centred on the grid. Therefore, we evaluate the error in the global momentum and energy according to (\ref{errclgl}) with
$$\widetilde{G}_2=\tfrac{1}2(\mu_{\dx}u_{m-1,n})^2,\qquad \widetilde{G}_3=\tfrac{1}3(\mu_{\dx}u_{m-1,n})^3+(\mu_{\dx}u_{m-1,n})D_{2,\dx}(\mu_{\dx}u_{m-1,n}).$$
For methods EC($\alpha$) (resp. MC($\beta,\gamma$)) we evaluate the error (\ref{errclgl}) in the conservation of the momentum (resp. the energy) with $\widetilde{F}_2$ and $\widetilde{G}_2$ (resp. $\widetilde{F}_3$ and $\widetilde{G}_3$) given in (\ref{MomMC}) (resp. (\ref{EnergyEC})) with all parameters set to zero.
As a first numerical test we consider the motion of a soliton. The initial condition is obtained from the exact solution on $\mathbb{R}$,
$$u(x,t)=3\,\sech^2\left(\tfrac{1}2(x-t+5)\right),$$
evaluated at $t=0$. We solve this problem over $[a,b]=[-20,20]$ till the final time $T=10$, and set a grid with $\dx=0.05$ and $\dt=0.4$.

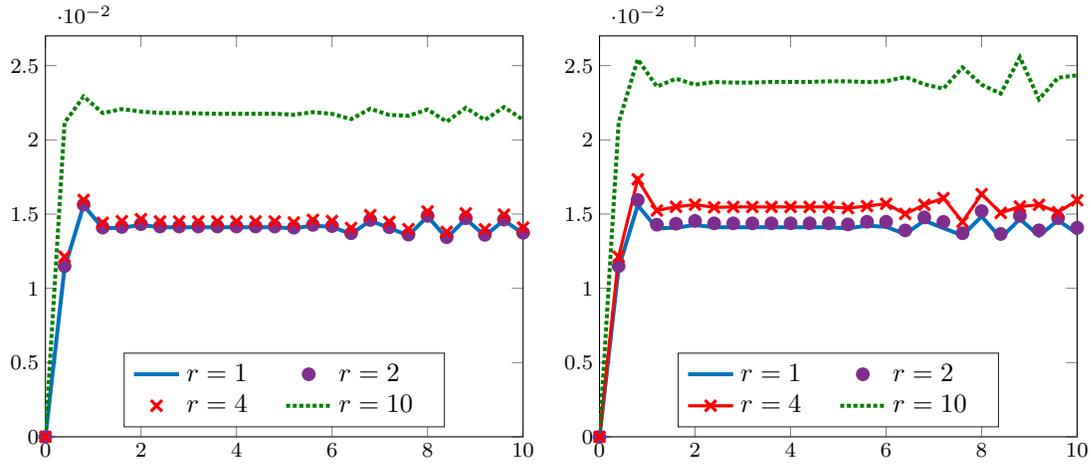
\begin{figure}[tbp]
\begin{center}
	\iftoggle{pgfplots}{\begin{tikzpicture}

\begin{axis}[%
width=2.5in,
height=2.1in,
at={(0.2in,0in)},
scale only axis,
xmin=0,
xmax=10,
ymin=0,
ymax=0.027,
axis background/.style={fill=white},
legend columns=2,
legend style={at={(0.5,0.21)},anchor=north, draw=white!15!black, text width=3.5em},
]
\addplot [color=colorclassyblue, line width = 1.5pt]
  table[row sep=crcr]{%
0	0\\
0.4	0.0113712943380846\\
0.8	0.015592447489342\\
1.2	0.0140491641672313\\
1.6	0.0140751671732834\\
2	0.0142688912067355\\
2.4	0.0141133817412694\\
2.8	0.0141133817412694\\
3.2	0.0141133817412694\\
3.6	0.0141133817412694\\
4	0.0141133817412694\\
4.4	0.0141133817412694\\
4.8	0.0141133817412694\\
5.2	0.0140388492159269\\
5.6	0.014221799136614\\
6	0.0141400260389508\\
6.4	0.0136593275033897\\
6.8	0.0145591296080188\\
7.2	0.0140573618280717\\
7.6	0.0135611191214473\\
8	0.0148411692449971\\
8.4	0.0133900857207792\\
8.8	0.0146691401907863\\
9.2	0.0135418129067595\\
9.6	0.0145812311051967\\
10	0.0136849856711994\\
};
\addlegendentry{$r=1$}

\addplot [only marks, mark=*, mark size = 2pt, mark options={solid,colorpurple},line width = 1.2pt]
  table[row sep=crcr]{%
0	0\\
0.4	0.0114976584836633\\
0.8	0.0156380433480047\\
1.2	0.0140939074771822\\
1.6	0.0141373630340395\\
2	0.0143200764598247\\
2.4	0.0141639231831168\\
2.8	0.0141639231831168\\
3.2	0.0141639231831168\\
3.6	0.0141639231831168\\
4	0.0141639231831168\\
4.4	0.0141639231831168\\
4.8	0.0141639231831168\\
5.2	0.0140906784213198\\
5.6	0.0142726799105244\\
6	0.0141920471935708\\
6.4	0.0137127193559802\\
6.8	0.0146049236047599\\
7.2	0.0141148720655181\\
7.6	0.0136180256232869\\
8	0.0148811102743438\\
8.4	0.0134450028624749\\
8.8	0.0147178323990887\\
9.2	0.0136030161122445\\
9.6	0.0146293277150773\\
10	0.0137409261885238\\
};
\addlegendentry{$r=2$}

\addplot [only marks, mark=x, mark size = 3pt, mark options={solid,colorabs},  line width = 1.2pt]
  table[row sep=crcr]{%
0	0\\
0.4	0.0121299597112036\\
0.8	0.0159548465177579\\
1.2	0.0144155739178454\\
1.6	0.0145229566771103\\
2	0.0146604583488011\\
2.4	0.0145039098875163\\
2.8	0.0145039098875163\\
3.2	0.0145039098875163\\
3.6	0.0145039098875163\\
4	0.0145039098875163\\
4.4	0.0145039098875163\\
4.8	0.0145039098875163\\
5.2	0.0144338821450387\\
5.6	0.0146118483717539\\
6	0.0145347450567334\\
6.4	0.014062103014043\\
6.8	0.0149264712208969\\
7.2	0.0144738819266635\\
7.6	0.0139875000902516\\
8	0.0151755698736747\\
8.4	0.0137996990836454\\
8.8	0.0150504763036823\\
9.2	0.0139758025389692\\
9.6	0.0149642601520507\\
10	0.0140930963155762\\
};
\addlegendentry{$r=4$}

\addplot [color=colorimag, densely dotted, line width = 1.5pt]
  table[row sep=crcr]{%
0	0\\
0.4	0.0211488924016794\\
0.8	0.022924717554602\\
1.2	0.0218035841470419\\
1.6	0.0220736520940138\\
2	0.0219047435405373\\
2.4	0.0218129476338356\\
2.8	0.0218129476338356\\
3.2	0.021785301826166\\
3.6	0.0217570754611966\\
4	0.0217570754611966\\
4.4	0.0217570754611966\\
4.8	0.0217570754611966\\
5.2	0.0216964147700414\\
5.6	0.0218653918357601\\
6	0.0217523652320602\\
6.4	0.0213935286451561\\
6.8	0.0220950865856584\\
7.2	0.0216775867710507\\
7.6	0.021620255250616\\
8	0.022044392295746\\
8.4	0.0212225524165475\\
8.8	0.0221529700274654\\
9.2	0.0213408853309564\\
9.6	0.022207930889596\\
10	0.021354688020783\\
};
\addlegendentry{$r=10$}

\end{axis}

\begin{axis}[%
width=2.5in,
height=2.1in,
at={(3.1in,0in)},
scale only axis,
xmin=0,
xmax=10,
ymin=0,
ymax=0.027,
axis background/.style={fill=white},
legend columns=2,
legend style={at={(0.5,0.21)},anchor=north, draw=white!15!black,text width=3.5em},
]
\addplot [color=colorclassyblue,  line width = 1.5pt]
  table[row sep=crcr]{%
0	0\\
0.4	0.0113712943380846\\
0.8	0.015592447489342\\
1.2	0.0140491641672313\\
1.6	0.0140751671732834\\
2	0.0142688912067355\\
2.4	0.0141133817412694\\
2.8	0.0141133817412694\\
3.2	0.0141133817412694\\
3.6	0.0141133817412694\\
4	0.0141133817412694\\
4.4	0.0141133817412694\\
4.8	0.0141133817412694\\
5.2	0.0140388492159269\\
5.6	0.014221799136614\\
6	0.0141400260389508\\
6.4	0.0136593275033897\\
6.8	0.0145591296080188\\
7.2	0.0140573618280717\\
7.6	0.0135611191214473\\
8	0.0148411692449971\\
8.4	0.0133900857207792\\
8.8	0.0146691401907863\\
9.2	0.0135418129067595\\
9.6	0.0145812311051967\\
10	0.0136849856711994\\
};
\addlegendentry{$r=1$}

\addplot [only marks, mark=*, mark size = 2pt, mark options={solid,colorpurple},line width = 1.2pt]
  table[row sep=crcr]{%
0	0\\
0.4	0.0114976584836633\\
0.8	0.0159471300450079\\
1.2	0.0142858050368536\\
1.6	0.0143510370723648\\
2	0.0145414729401956\\
2.4	0.0143778184087228\\
2.8	0.0143778184087228\\
3.2	0.0143778184087228\\
3.6	0.0143778184087228\\
4	0.0143778184087228\\
4.4	0.0143778184087228\\
4.8	0.0143778184087228\\
5.2	0.0143053245590477\\
5.6	0.0144819000320423\\
6	0.0144819000320423\\
6.4	0.0139020843908288\\
6.8	0.0147768704550733\\
7.2	0.0144753099648095\\
7.6	0.013712410530806\\
8	0.0152002356836506\\
8.4	0.013663275220558\\
8.8	0.0148860642458386\\
9.2	0.0139077714693748\\
9.6	0.014728947098847\\
10	0.0140756679002216\\
};
\addlegendentry{$r=2$}

\addplot [colorabs, mark=x, mark size = 3pt, mark options={solid,colorabs},  line width = 1.2pt]
  table[row sep=crcr]{%
0	0\\
0.4	0.0121299597112036\\
0.8	0.0173413636271164\\
1.2	0.0152427029825241\\
1.6	0.0154928232610698\\
2	0.0156370935117993\\
2.4	0.0154540177913506\\
2.8	0.0154805484178851\\
3.2	0.0154805484178851\\
3.6	0.0154805484178851\\
4	0.0154805484178851\\
4.4	0.0154805484178851\\
4.8	0.0154805484178851\\
5.2	0.0154054505486054\\
5.6	0.0155368688878111\\
6	0.0156960378202518\\
6.4	0.015014859828873\\
6.8	0.0156233422869195\\
7.2	0.0160747748199808\\
7.6	0.0144789251200603\\
8	0.0163448811576515\\
8.4	0.0150800387702957\\
8.8	0.0155095605430619\\
9.2	0.0156256041819161\\
9.6	0.0151278272533317\\
10	0.0159325061358721\\
};
\addlegendentry{$r=4$}

\addplot [color=colorimag, densely dotted, line width = 1.5pt]
  table[row sep=crcr]{%
0	0\\
0.4	0.0211488924016794\\
0.8	0.0254282750258451\\
1.2	0.0235865657493124\\
1.6	0.0241204248486895\\
2	0.0237192897985163\\
2.4	0.0238977060110098\\
2.8	0.0238585780569699\\
3.2	0.0238585780569699\\
3.6	0.0239062235318405\\
4	0.0239062235318405\\
4.4	0.0239062235318405\\
4.8	0.0239446019217911\\
5.2	0.0239446019217911\\
5.6	0.0238902912941283\\
6	0.0239485134292638\\
6.4	0.0242390653305657\\
6.8	0.0237438688890532\\
7.2	0.023449936569339\\
7.6	0.0248795282223649\\
8	0.0237151996633765\\
8.4	0.0231144651688539\\
8.8	0.0255618674742629\\
9.2	0.022732494726632\\
9.6	0.0241835163544818\\
10	0.0243552495922895\\
};
\addlegendentry{$r=10$}

\end{axis}

\end{tikzpicture}
\end{center}
	\caption{One-soliton problem for KdV (\ref{eq:nonlinearPDEex2}). Sequence of parameters $\{\alpha_{r,n}^*\}_{n\geqslant 0}$ for EC($\alpha$), (\ref{ECschemes}), obtained using Algorithm~\ref{General}.\ref{alg:opt} (left) and Algorithm~\ref{General}.\ref{alg:opt2} (right) with different values of $r$.}
	\label{a1}
\end{figure}
\begin{figure}[tbp]
\begin{center}
	\iftoggle{pgfplots}{\input{bg1.tex}}{}
\end{center}
	\caption{One-soliton problem for KdV (\ref{eq:nonlinearPDEex2}). Sequence of parameters $\{(\beta_{r,n}^*,\gamma_{r,n}^*)\}_{n\geqslant 0}$ for MC($\beta,\gamma$), (\ref{MCschemes}), obtained using Algorithm~\ref{General}.\ref{alg:opt} (left) and Algorithm~\ref{General}.\ref{alg:opt2} (right) with different values of $r$.}
	\label{bg1}
\end{figure}
\begin{table}[t]
\caption{One-soliton problem for KdV (\ref{eq:nonlinearPDEex2}). Errors in solution and conservation laws, and computation time.}\label{tab:errtabKdV}
\small
\begingroup
\setlength{\tabcolsep}{6pt}
\renewcommand{\arraystretch}{1.12} 
\centerline{\begin{tabular}{|c|c|c|c|c|c|}
\hline
Method &  $\text{Err}_1$ & $\text{Err}_2$ & $\text{Err}_3$ & Sol. err. & Comput. time\\ 
\hline
\hline
EC$(\alpha_{1,n}^*)_{n\geq 0}$	&  1.88e-13   & 3.24e-04 & 6.25e-13 & 0.0139  & 32.33\\	
\hline
EC$(\alpha_{2,n}^*)_{n\geq 0}$	&  1.92e-13   & 3.27e-04 & 6.43e-13 & 0.0139  & 14.43\\	
\hline
EC$(\alpha_{4,n}^*)_{n\geq 0}$	&  1.31e-13   & 3.50e-04 & 4.51e-13 & 0.0133  & 8.06\\	
\hline
EC$(\alpha_{10,n}^*)_{n\geq 0}$	&  2.03e-13   & 0.0010 & 1.06e-12 & 0.0092  & 6.51\\	
\hline
\hline
EC$(\overline{\alpha}^*_1)\equiv$ EC$(0.014)$	&  3.11e-13   & 3.28e-04 & 1.16e-12 & 0.0140 & 42.12\\	
\hline
EC$(\overline{\alpha}^*_2)\equiv$ EC$(0.014)$	&  2.84e-13   & 3.44e-04 & 8.14e-12 & 0.0136 & 16.39\\	
\hline
EC$(\overline{\alpha}^*_4)\equiv$ EC$(0.015)$	&  1.97e-13   & 4.15e-04 & 6.91e-13 & 0.0120 & 9.65\\
\hline
EC$(\overline{\alpha}^*_{10})\equiv$ EC$(0.024)$	&  1.10e-13   & 0.0013 & 5.81e-13 & 0.0113 & 7.10\\		
\hline
\hline
MC$(\beta_{1,n}^*,\gamma_{1,n}^*)_{n\geq 0}$	&  9.33e-13   & 0.0017 & 4.90e-04 & 0.0135  & 37.38\\	
\hline
MC$(\beta_{2,n}^*,\gamma_{2,n}^*)_{n\geq 0}$	&  6.64e-13   & 0.0017 & 5.08e-04 & 0.0140  & 16.26\\	
\hline
MC$(\beta_{4,n}^*,\gamma_{4,n}^*)_{n\geq 0}$	&  3.32e-13   & 0.0019 & 5.70e-04 & 0.0161  & 8.00\\	
\hline
MC$(\beta_{10,n}^*,\gamma_{10,n}^*)_{n\geq 0}$	&  1.92e-13   & 0.0028 & 9.45e-04 & 0.0242  & 6.95\\	
\hline
\hline
MC$(\overline{\beta}^*_1,\overline{\gamma}^*_{1})\equiv$ MC$(0.045,0.016)$	&  5.61e-13   & 3.03e-12 & 5.01e-04 & 0.0135 & 50.07\\	
\hline
MC$(\overline{\beta}^*_2,\overline{\gamma}^*_{2})\equiv$ MC$(0.046,0.017)$	&  5.12e-13   & 3.33e-12 & 5.08e-04 & 0.0135 & 17.56\\	
\hline
MC$(\overline{\beta}^*_4,\overline{\gamma}^*_{4})\equiv$ MC$(0.048,0.018)$	&  4.90e-13   & 4.35e-12 & 5.36e-04 & 0.0138 & 9.11\\	
\hline
MC$(\overline{\beta}^*_{10},\overline{\gamma}^*_{10})\equiv$ MC$(0.063,0.030)$	&  7.14e-13   & 4.44e-12 & 8.00e-04 & 0.0181 & 7.74\\	
\hline
\hline
EC$(0)$	& 1.39e-13   & 1.54e-04 & 3.27e-13 &  0.0376 & 6.07 \\	
\hline
MC$(0,0)$	& 2.34e-13   & 4.67e-13 & 3.04e-04 &  0.0446 & 6.07 \\	
\hline
EC$(\alpha^*)\equiv$ EC$(0.020)$	&  1.39e-13   & 8.26e-04 & 4.14e-13 & 0.0085 & N/A \\
\hline
MC$(\beta^*,\gamma^*)\equiv$ MC$(0.055,0.031)$	&  1.23e-12   & 7.27e-12 & 7.57e-04 & 0.0083 & N/A \\
\hline
Multisymplectic&  9.52e-13   & 6.40e-06 & 2.73e-04 & 0.0447 & 8.55\\		
\hline
Narrow box&  2.22e-12   & 2.39e-06 & 2.90e-04 & 0.0434 & 7.24\\
\hline
\end{tabular}}
\endgroup
\end{table}
We first find estimates for the optimal parameters of the schemes EC($\alpha$) and MC($\beta,\gamma$) by applying the two new algorithms. In Figure~\ref{a1} we show the sequences of optimal values given by Algorithm~\ref{General}.\ref{alg:opt} and Algorithm~\ref{General}.\ref{alg:opt2}, respectively, for the scheme EC. Similarly, in Figure~\ref{bg1} we show the sequences obtained for the scheme MC.

Since the solution only travels along the same direction with constant speed, after a few initial steps the sequences of parameters stabilize around constant values.

The sequences obtained by the two algorithms with $r=1,2,4,$ are all very close to each other (within a maximum distance of $5\cdot 10^{-3}$). In agreement with Remark~\ref{remK}, those obtained with $r=10>\dt/\dx$ show the effect of the space error on the coarser grid. This shows that the accuracy in the solution can be compromised when $r$ is too large. Nevertheless, these parameters are not too far from the values obtained on the computational grid, and still show a good level of accuracy in all cases, further reducing the computational overheads involved in the solution of the optimization problem.

In Table~\ref{tab:errtabKdV} we compare different schemes. The results obtained show that:
\begin{itemize}
\item[•] Schemes EC and MC with fixed values of the parameters exactly preserve two conservation laws, and therefore two global invariants.
\item[•] According to Remark~\ref{remEC}, due to the conservative boundary conditions, the sequence of schemes EC$(\alpha^*_{r,n})_{n\geqslant 0}$ preserves the global mass and the global energy. Similarly, according to Remark~\ref{remMC}, schemes MC$(\beta^*_{r,n},\gamma^*_{r,n})_{n\geqslant 0}$ preserve the global mass but not the global momentum.
\item[•] The computational cost of both of the proposed algorithms decreases as $r$ increases. With respect to solving the optimization problem on the same grid of the differential problem ($r=1$), the overall computational time reduces to less than a half when $r=2$, and to less than a fourth when $r=4$, making the computation cost of the methods proposed in this paper comparable to that of the other schemes in literature. As expected, setting $r=10$, the computational time further reduces.
\item[•] All the approximations obtained with the sequences of parameters given by Algorithm~\ref{General}.\ref{alg:opt} and Algorithm~\ref{General}.\ref{alg:opt2} are more accurate than the solutions of the schemes from the literature and of EC(0) and MC(0,0). However, schemes EC($\alpha^*$) and MC($\beta^*,\gamma^*$), where the parameters are obtained by brute force, are more precise. This shows that there exist sequences of parameters yielding higher accuracy than those obtained using the new algorithms. This is due to the fact that our approach is based on a minimization of (\ref{estim}) as an estimate of the local error. This improves local error but does not necessarily lead to a minimization of the global error.
\item[•] The accuracy of the solutions obtained using the two new algorithms is only marginally affected by the different choices of $r<\dt/\dx$. However, for $r=10>\dt/\dx$ the spatial error has a visible effect on the sequences of parameters obtained, as is evident in Figures~\ref{a1} and~\ref{bg1}. In this case, the accuracy in the solution is only marginally affected but in general this may not be the case. In agreement with Remark~\ref{remK}, setting $r=4$ gives the best compromise between reliability and speed of computation.

\item[•] Algorithm~\ref{General}.\ref{alg:opt} and Algorithm~\ref{General}.\ref{alg:opt2} with $r=4$ are significantly more efficient than the schemes from the literature: while computation times are comparable, the solution error is roughly three times lower.
\end{itemize}
\begin{figure}[tbp]
\begin{center}
	\iftoggle{pgfplots}{
	\input{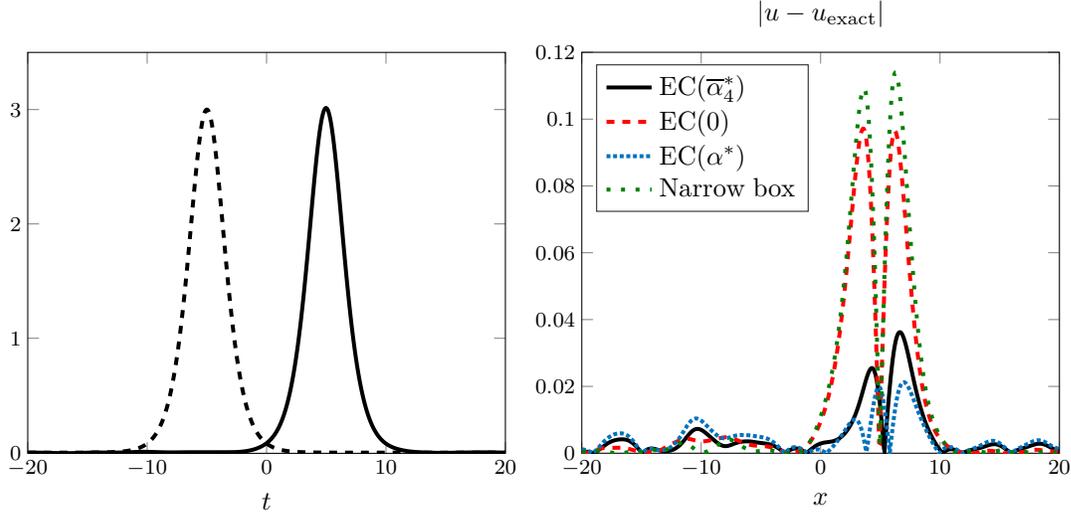} }{}
\end{center}
	\caption{One-soliton problem for KdV (\ref{eq:nonlinearPDEex2}). Left: Initial condition (dashed line) and solution of EC($\overline\alpha^*_{4})$. Right: Pointwise error for different schemes at the final time.}
	\label{figkdv1}
\end{figure}
In the left plot of Figure~\ref{figkdv1} we show the initial profile and, as an example, the solution of EC($\overline{\alpha}_{4}^*$) at the final time. In the right plot we show the absolute error given by EC($\overline{\alpha}_{4}^*$) at every point, in comparison with EC(0), EC($\alpha^*$), and the narrow box scheme. For all schemes, the bulk of the error is detected around the final location of the soliton and is due to a delay introduced by all numerical schemes. However, the maximum error introduced by EC($\overline{\alpha}_{4}^*$) is less than the 40\% of that given by EC(0) and by the narrow box scheme, and only slightly larger than the error given by EC($\alpha^*$).

As a second numerical test we consider the interaction of two solitons over $[a,b]=[-30,30]$ and till time $T=15$. The initial condition is obtained from the exact solution on $\mathbb{R}$,
$$u(x,t)=\frac{12(c_1-c_2)(c_1\cosh^2\xi_2+c_2\sinh^2\xi_1)}{\left((\sqrt{c_1}-\sqrt{c_2})\cosh(\xi_1+\xi_2)+(\sqrt{c_1}+\sqrt{c_2})\cosh(\xi_1-\xi_2)\right)^2},$$
with $$\xi_1=\frac{c_1}2(x+d_1-c_1t),\quad\xi_2=\frac{c_2}2(x+d_2-c_2t).$$
We set
$$c_1=2,\qquad c_2=1,\qquad d_1=17,\qquad d_2=10,\qquad \dx=0.05,\qquad \dt=0.25.$$

\begin{figure}[tbp]
\begin{center}
	\iftoggle{pgfplots}{\input{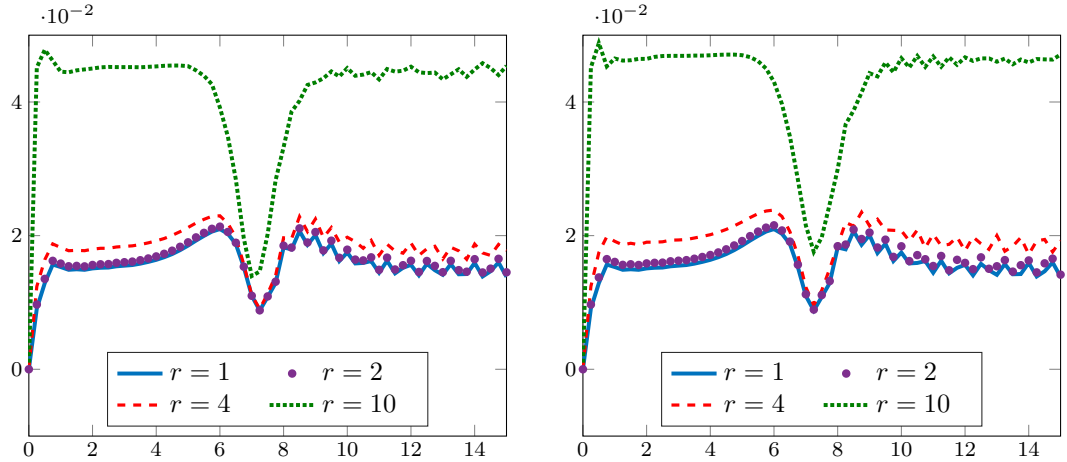}
	}{}
\end{center}
	\caption{Two-soliton problem for KdV (\ref{eq:nonlinearPDEex2}). Sequence of parameters $\{\alpha_{r,n}^*\}_{n\geqslant 0}$ for EC($\alpha$), (\ref{ECschemes}), obtained using Algorithm~\ref{General}.\ref{alg:opt} (left) and Algorithm~\ref{General}.\ref{alg:opt2} (right) with different values of $r$.}
	\label{a2}
\end{figure}
\begin{figure}[tbp]
\begin{center}
	\iftoggle{pgfplots}{\input{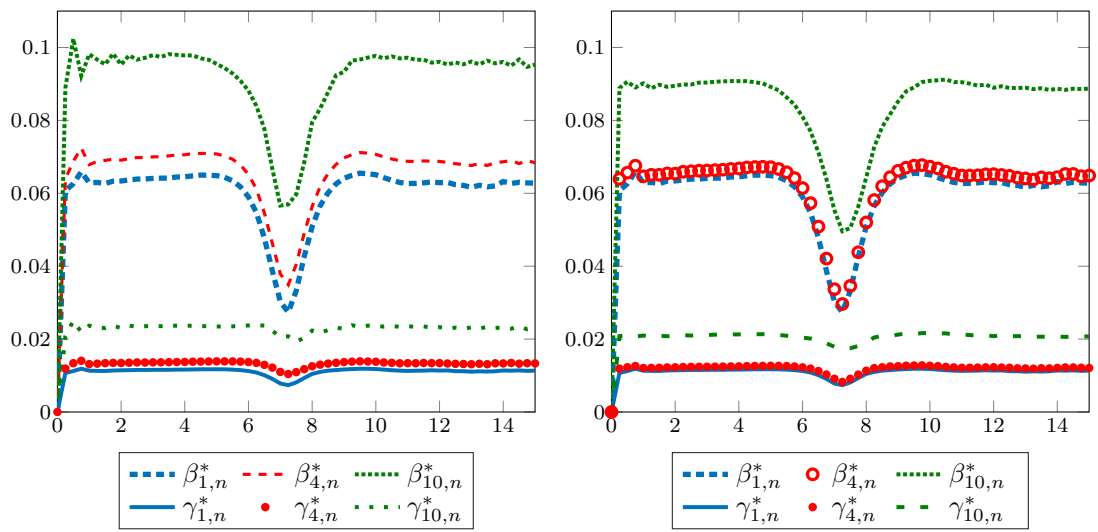}}{}
\end{center}
	\caption{Two-soliton problem for KdV (\ref{eq:nonlinearPDEex2}). Sequence of parameters $\{(\beta_{r,n}^*,\gamma_{r,n}^*)\}_{n\geqslant 0}$ for MC($\beta,\gamma$), (\ref{MCschemes}), obtained using Algorithm~\ref{General}.\ref{alg:opt} (left) and Algorithm~\ref{General}.\ref{alg:opt2} (right) with different values of $r$.}
	\label{bg2}
\end{figure}
In Figures~\ref{a2} and \ref{bg2} we show the sequences of parameters obtained for the schemes EC and MC, respectively. For sake of clarity, we omit in Figure~\ref{bg2} the sequences obtained for $r=2$ as these would be indistinguishable from those obtained setting $r=1$. Again, all the sequences obtained by solving the optimization problems on a grid up to four times coarser are very close (within a distance of $6\cdot 10^{-3}$) to those obtained on the computational grid.

We notice that the parameters rapidly change when the solitons interact, but only slowly variate before and after the interaction.
\begin{table}[t]
\caption{Two-soliton problem for KdV (\ref{eq:nonlinearPDEex2}). Errors in solution and conservation laws, and computation time.}\label{tab:errtabKdV2}
\small
\begingroup
\setlength{\tabcolsep}{6pt} 
\renewcommand{\arraystretch}{1.12} 
\centerline{\begin{tabular}{|c|c|c|c|c|c|}
\hline
Method &  $\text{Err}_1$ & $\text{Err}_2$ & $\text{Err}_3$ & Sol. err. & Comput. time\\ 
\hline
\hline
EC$(\alpha_{1,n}^*)_{n\geq 0}$	&   2.81e-13  & 0.0490 & 4.79e-12 & 0.1898 & 364.25\\	
\hline
EC$(\alpha_{2,n}^*)_{n\geq 0}$	&  2.10e-13   & 0.0453 &  5.91e-12 & 0.1856  & 127.11 \\	
\hline
EC$(\alpha_{4,n}^*)_{n\geq 0}$	& 6.57e-13    & 0.0287 & 9.82e-12 & 0.1664  & 85.69\\	
\hline
EC$(\alpha_{10,n}^*)_{n\geq 0}$	& 3.30e-13    & 0.1439  & 4.04e-12 & 0.1275  & 84.41\\	
\hline
\hline
EC$(\overline\alpha^*_{1})\equiv$ EC$(0.016)$	&  5.90e-13   & 0.0697 & 5.85e-12 & 0.1897  & 474.40 \\	
\hline
EC$(\overline\alpha^*_{2})\equiv$ EC$(0.017)$	&  1.99e-13   & 0.0637  & 2.56e-12  &  0.1841 & 166.42\\	
\hline
EC$(\overline\alpha^*_{4})\equiv$ EC$(0.019)$	& 5.12e-13    & 0.0384 & 6.12e-12 & 0.1604 & 97.08\\
\hline
EC$(\overline\alpha^*_{10})\equiv$ EC$(0.043)$	& 8.95e-13    & 0.1768 & 1.45e-11 & 0.1250 & 93.81\\		
\hline
\hline
MC$(\beta_{1,n}^*,\gamma_{1,n}^*)_{n\geq 0}$	&  1.21e-12   & 0.4128  & 0.9188  & 0.0834  & 416.18 \\	
\hline
MC$(\beta_{2,n}^*,\gamma_{2,n}^*)_{n\geq 0}$	&  2.06e-12   & 0.4187  & 0.9183  & 0.0825  & 175.44 \\	
\hline
MC$(\beta_{4,n}^*,\gamma_{4,n}^*)_{n\geq 0}$	& 2.49e-12    & 0.4404 & 0.9165 & 0.0812  & 94.32\\	
\hline
MC$(\beta_{10,n}^*,\gamma_{10,n}^*)_{n\geq 0}$	&  3.24e-12   & 0.5410  & 0.9082 & 0.1252  & 86.30\\	
\hline
\hline
MC$(\overline\beta^*_{1},\overline\gamma^*_{1})\equiv$ MC$(0.0602,0.0111)$	&  1.49e-12   & 1.66e-12  & 0.9269  & 0.0874  & 480.98 \\	
\hline
MC$(\overline\beta^*_{2},\overline\gamma^*_{2})\equiv$ MC$(0.0606,0.0113)$	& 2.38e-12    & 1.85e-11  & 0.9266 & 0.0873 & 177.06\\	
\hline
MC$(\overline\beta^*_{4},\overline\gamma^*_{4})\equiv$ MC$(0.0624,0.0120)$	&  1.15e-12   & 1.48e-11 & 0.9256 & 0.0856 & 92.42 \\	
\hline
MC$(\overline\beta^*_{10},\overline\gamma^*_{10})\equiv$ MC$(0.0848,0.0207)$	& 4.22e-12   & 1.39e-11 & 0.9198 & 0.0918 & 86.05 \\	
\hline
\hline
EC$(0)$	&  4.44e-13   & 0.2186 & 5.44e-12 & 0.3208  & 78.11 \\	
\hline
MC$(0,0)$	& 2.03e-13   & 6.04e-13 & 0.8567 &  0.3884 & 74.59 \\	
\hline
EC$(\alpha^*)\equiv$ EC$(0.034)$	& 7.07e-13    & 0.0951 & 1.01e-11 & 0.0683 & N/A \\
\hline
MC$(\beta^*,\gamma^*)\equiv$ MC$(0.147,0.065)$	& 7.90e-12    & 7.10e-11 & 0.8318 & 0.0689 & N/A \\
\hline
Multisymplectic&  2.91e-12   & 0.0081 & 0.8807 & 0.3885 & 94.28\\		
\hline
Narrow box&  4.50e-12   & 0.0041 & 0.8160 & 0.3825 & 82.44\\
\hline
\end{tabular}}
\endgroup
\end{table}

In Table~\ref{tab:errtabKdV2} we compare the accuracy, efficiency and conservation properties of different schemes. The same observations done for the case of the motion of a solitary wave hold also in this case. As before, the computational times of the new algorithms with $r=4$ are comparable to those of the methods from the literature, and their greater efficiency is evident through solution errors that are much lower.
\begin{figure}[tbp]
\begin{center}
	\iftoggle{pgfplots}{\input{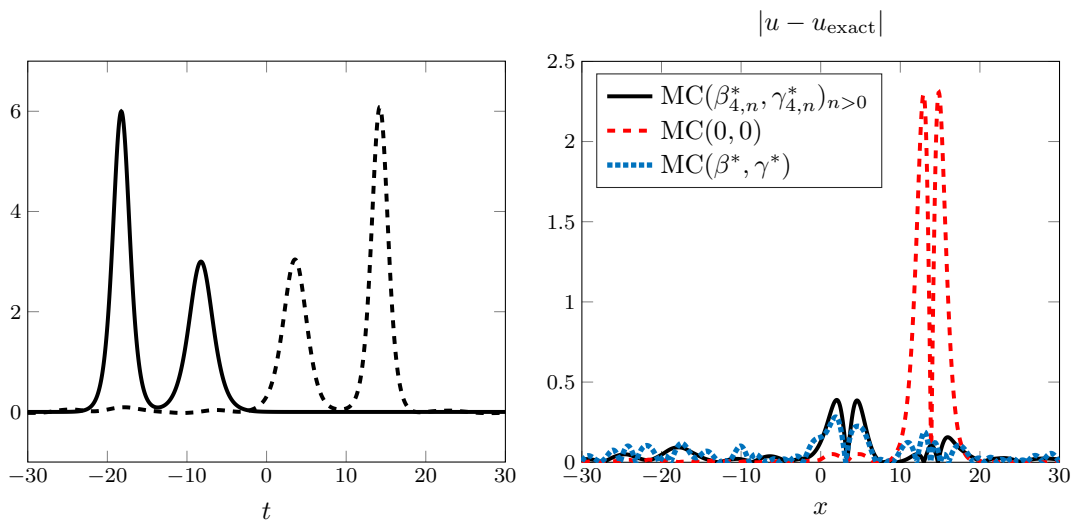}}{}
\end{center}
	\caption{Two-soliton problem for KdV (\ref{eq:nonlinearPDEex2}). Left: Initial condition (dashed line) and solution of MC$(\beta_{4,n}^*,\gamma_{4,n}^*)_{n\geq 0}$. Right: Pointwise error for different schemes at the final time.}
	\label{figkdv2}
\end{figure}

In the left plot of Figure~\ref{figkdv2}, we show the initial condition together with the solution of the sequence MC$(\beta_{4,n}^*,\gamma_{4,n}^*)_{n\geq 0}$. In the right plot, we show the pointwise error of MC$(\beta_{4,n}^*,\gamma_{4,n}^*)_{n\geq 0}$, MC(0,0), and MC$(\beta^*,\gamma^*)$ at the final time. We omit the results for the multisymplectic and the narrow box scheme, as they are similar to that of MC(0,0). The methods obtained using the approaches introduced in this paper are very accurate around the final location of the faster soliton, where the bulk of the error given by MC$(0,0)$ and by the schemes from the literature is located. However, the error around the slower soliton is larger and small oscillations can be seen far from the solitons. MC$(\beta^*,\gamma^*)$ gives a slightly smaller error around the slower soliton, but more oscillations are introduced. 
\subsection{Nonlinear heat equation}
In this section we apply the new algorithms to the nonlinear heat equation (\ref{eq:nonlinearPDEex1}) using the methods CS($\lambda$) from Section~\ref{NLH}.

We consider here two benchmark problems with (weak) energy solutions that are not classic solutions, having at least one point of non differentiability. Although in Section~\ref{General} smoothness of the solution is assumed, we show that the strategies introduced in this paper are also effective in this setting.

In order to converge, explicit and implicit finite difference methods for (\ref{eq:nonlinearPDEex1}) in literature typically require $\dt=\mathcal{O}(\dx^2)$ and $\dt=\mathcal{O}(\dx)$, respectively \cite{delTeso,delTeso2,grav,gurtin,hoff}. Under such small time-step restrictions, the Crank-Nicolson method applied to the semidiscretization (\ref{eq:Aop1}) turns out to be very accurate and efficient. However, it fails to converge for the two benchmark tests in this section with $\dt>\dx$. Such instabilities may also occur when using a CS($\lambda$) method with a default choice of the parameter, $\lambda=0$. In contrast, we find that the two proposed algorithms in Section~\ref{General}, based on optimization of defect based error estimate, are able to avoid these instabilities.

The first benchmark problem is given by equation \R{eq:nonlinearPDEex1} with initial and boundary conditions,
\begin{equation}\label{eq:datatest}
u(x,0)=0,\qquad u(0,t)=t,\qquad u(5,t)=0,\qquad (x,t)\in [0,6]\times[0,3].
\end{equation}
The solution of this problem is a linear wave travelling in an undisturbed medium with unit speed,
\begin{equation*}
u_{\text{exact}}(x,t)=\left\{\begin{array}{ll}
t-x,& \text{if } t>x,\\
0,& \text{otherwise.}
\end{array}\right.
\end{equation*}
We discretize the initial boundary value problem described by \R{eq:nonlinearPDEex1} and \R{eq:datatest}, choosing $\dx=0.025$ and $\dt=0.12$. The graphs in Figure~\ref{fig:advs0} show the sequences of parameters obtained from Algorithm~\ref{General}.\ref{alg:opt} and Algorithm~\ref{General}.\ref{alg:opt2}. Although the values are different for the first time-steps, for the smaller values of $r$ the two procedures converge to the same value.

In Table~\ref{tab:errtab} we compare the different choices for the parameter $\lambda$. The error in the conservation laws is calculated according to (\ref{errcl}) and the figures in the table show that these are preserved to machine accuracy by all the methods that use a fixed value of the free parameter.

\begin{table}[t]
\caption{NLH (\ref{eq:nonlinearPDEex1}) with initial and boundary conditions (\ref{eq:datatest}). Errors in solution and conservation laws and computation time.}\label{tab:errtab}
\small
\centerline{\begin{tabular}{|c|c|c|c|c|}
\hline
$\lambda$ &  $\text{Err}_1$ & $\text{Err}_2$ & Solution error & Computation time\\ 
\hline
\hline
$\{\lambda_{1,n}^*\}_{n\geqslant 0}$	&  0.1258   & 0.0190 & 0.0054 & 0.136 \\	
\hline
$\{\lambda_{2,n}^*\}_{n\geqslant 0}$	&  0.1182   & 0.0169 & 0.0052 & 0.069 \\
\hline
$\{\lambda_{4,n}^*\}_{n\geqslant 0}$	&  0.1030   & 0.0132 & 0.0047 & 0.055 \\
\hline
$\{\lambda_{10,n}^*\}_{n\geqslant 0}$	&  0.0817   & 0.0127 & 0.0027 & 0.035 \\
\hline
\hline
$\overline\lambda_{1}^*= -0.0115$	&  1.18e-13   & 2.98e-14 & 0.0054 & 0.122 \\
\hline
$\overline\lambda_{2}^*=-0.0110$	&  8.06e-14   & 3.20e-14 & 0.0052 & 0.077 \\	
\hline
$\overline\lambda_{4}^*=-0.0096$	&  1.15e-13   & 4.48e-14 & 0.0046 & 0.046 \\
\hline
$\overline\lambda_{10}^*= -0.0064$ &  6.68e-14   & 5.12e-14 & 0.0031 & 0.028 \\
\hline
\hline
$\lambda=0$	& 6.54e-14   & 9.24e-15 & 7.5017 &  0.016 \\
\hline
$\lambda^*=-0.0044$	&  9.24e-14   & 3.27e-14 & 0.0023 & N/A\\
\hline
\end{tabular}}
\end{table}
\begin{figure}[tbp]
\begin{center}
	\iftoggle{pgfplots}{\begin{tikzpicture}

\begin{axis}[%
width=2.5in,
height=2.1in,
at={(0.2in,0in)},
scale only axis,
xmin=0,
xmax=3,
ymin=-0.02,
ymax=0.0002,
axis background/.style={fill=white},
legend columns=2,
legend style={at={(0.5,0.21)},anchor=north, draw=white!15!black, text width=3.5em},
]
\addplot [color=colorclassyblue, line width = 1.5pt]
  table[row sep=crcr]{%
0	0\\
0.12	-0.0189738466667441\\
0.24	-0.0189738466667441\\
0.36	-0.0138703086277175\\
0.48	-0.0123117063829757\\
0.6	-0.0112982235722997\\
0.72	-0.0107978084408915\\
0.84	-0.0106692747340432\\
0.96	-0.0107297162091317\\
1.08	-0.0108702291007542\\
1.2	-0.0110184216371119\\
1.32	-0.0111411053928351\\
1.44	-0.0112266748289858\\
1.56	-0.0112786443936955\\
1.68	-0.0113047625484627\\
1.8	-0.0113163318028231\\
1.92	-0.0113189773182982\\
2.04	-0.0113190854782163\\
2.16	-0.0113182229047709\\
2.28	-0.01131764408863\\
2.4	-0.0113177594042297\\
2.52	-0.0113185308482294\\
2.64	-0.011319725482942\\
2.76	-0.0113211052569178\\
2.88	-0.0113224792407227\\
3	-0.0113237352976592\\
};
\addlegendentry{$r=1$}

\addplot [only marks, mark=*, mark size = 2pt, mark options={solid,colorpurple},line width = 1.2pt]
  table[row sep=crcr]{%
0	0\\
0.12	-0.0169368088340224\\
0.24	-0.0169368088340224\\
0.36	-0.012877445296241\\
0.48	-0.0118562943127849\\
0.6	-0.0110864550793771\\
0.72	-0.0106903485255723\\
0.84	-0.0105738239242116\\
0.96	-0.0106044252938144\\
1.08	-0.0107003754247079\\
1.2	-0.0108050227431159\\
1.32	-0.0108931768661039\\
1.44	-0.0109557403107\\
1.56	-0.0109935345784145\\
1.68	-0.0110146133451515\\
1.8	-0.0110240829859255\\
1.92	-0.0110267080417449\\
2.04	-0.0110273005233522\\
2.16	-0.0110269594254419\\
2.28	-0.0110266531545275\\
2.4	-0.0110267501452676\\
2.52	-0.0110272723975844\\
2.64	-0.0110280880426057\\
2.76	-0.0110290430315337\\
2.88	-0.0110300084507137\\
3	-0.0110309032951023\\
};
\addlegendentry{$r=2$}

\addplot [colorabs, mark=x, mark size = 3pt, mark options={solid,colorabs},  line width = 1.2pt]
  table[row sep=crcr]{%
0	0\\
0.12	-0.0131839473189761\\
0.24	-0.0131839473189761\\
0.36	-0.0106724157036676\\
0.48	-0.0103374257818605\\
0.6	-0.00994255416697802\\
0.72	-0.00974756648675125\\
0.84	-0.00972723859910398\\
0.96	-0.00974061262135248\\
1.08	-0.00972191291680772\\
1.2	-0.0097127279880946\\
1.32	-0.00976930001795233\\
1.44	-0.00985884102076011\\
1.56	-0.00989897981644514\\
1.68	-0.00986560427899413\\
1.8	-0.00981879605933818\\
1.92	-0.00983367227898907\\
2.04	-0.0098952613124933\\
2.16	-0.00992039413849568\\
2.28	-0.00987836318458544\\
2.4	-0.00982460419248923\\
2.52	-0.0098366547481595\\
2.64	-0.00989698730685796\\
2.76	-0.00992244108442744\\
2.88	-0.00988089323770416\\
3	-0.00982705565897342\\
};
\addlegendentry{$r=4$}

\addplot [color=colorimag, densely dotted, line width = 1.5pt]
  table[row sep=crcr]{%
0	0\\
0.12	-0.00871330246895771\\
0.24	-0.0126778048559879\\
0.36	-0.000335666016537431\\
0.48	-0.00571422378488531\\
0.6	-0.00268505509234015\\
0.72	-0.00545233420291492\\
0.84	-0.004745817444437\\
0.96	-0.00543598811639193\\
1.08	-0.00529911079208862\\
1.2	-0.00506874134406545\\
1.32	-0.00597485321460424\\
1.44	-0.00468679867397076\\
1.56	-0.00652733793525865\\
1.68	-0.00431261184225993\\
1.8	-0.00689670655959402\\
1.92	-0.00397483159849098\\
2.04	-0.00708921907180836\\
2.16	-0.00368877736347232\\
2.28	-0.00713907615238215\\
2.4	-0.00346364239870288\\
2.52	-0.0070733905515255\\
2.64	-0.00331684937544762\\
2.76	-0.00691716283633012\\
2.88	-0.00326773236021988\\
3	-0.00669066821706902\\
};
\addlegendentry{$r=10$}

\end{axis}

\begin{axis}[%
width=2.5in,
height=2.1in,
at={(3.1in,0in)},
scale only axis,
xmin=0,
xmax=3,
ymin=-0.02,
ymax=0.0002,
axis background/.style={fill=white},
legend columns=2,
legend style={at={(0.5,0.21)},anchor=north, draw=white!15!black,text width=3.5em},
]
\addplot [color=colorclassyblue,  line width = 1.5pt]
  table[row sep=crcr]{%
0	0\\
0.12	-0.0189766781954106\\
0.24	-0.00249088466203917\\
0.36	-0.0126509801972895\\
0.48	-0.0123489919144722\\
0.6	-0.0124018803576679\\
0.72	-0.0121212716783702\\
0.84	-0.0117770655919492\\
0.96	-0.011524556321998\\
1.08	-0.0113809513132734\\
1.2	-0.0113088813017068\\
1.32	-0.0112892989221406\\
1.44	-0.0112892989221406\\
1.56	-0.011299277380006\\
1.68	-0.0113122855501061\\
1.8	-0.0113233906261872\\
1.92	-0.0113310048912945\\
2.04	-0.0113310048912945\\
2.16	-0.0113310048912945\\
2.28	-0.0113310048912945\\
2.4	-0.0113310048912945\\
2.52	-0.0113310048912945\\
2.64	-0.0113310048912945\\
2.76	-0.0113310048912945\\
2.88	-0.0113310048912945\\
3	-0.0113310048912945\\
};
\addlegendentry{$r=1$}

\addplot [only marks, mark=*, mark size = 2pt, mark options={solid,colorpurple},line width = 1.2pt]
  table[row sep=crcr]{%
0	0\\
0.12	-0.0169312929290579\\
0.24	-0.00176983957695752\\
0.36	-0.0114530247779794\\
0.48	-0.0116976504823636\\
0.6	-0.0120481924418051\\
0.72	-0.011893368709082\\
0.84	-0.0115838269042005\\
0.96	-0.0113318949734207\\
1.08	-0.0111571879887329\\
1.2	-0.0110551839151963\\
1.32	-0.0110058049675418\\
1.44	-0.0109914275564657\\
1.56	-0.0109914275564657\\
1.68	-0.0109914275564657\\
1.8	-0.0110015774838877\\
1.92	-0.0110081035546527\\
2.04	-0.0110081035546527\\
2.16	-0.0110081035546527\\
2.28	-0.0110081035546527\\
2.4	-0.0110081035546527\\
2.52	-0.0110081035546527\\
2.64	-0.0110081035546527\\
2.76	-0.0110081035546527\\
2.88	-0.0110081035546527\\
3	-0.0110081035546527\\
};
\addlegendentry{$r=2$}

\addplot [colorabs, mark=x, mark size = 3pt, mark options={solid,colorabs},  line width = 1.2pt]
  table[row sep=crcr]{%
0	0\\
0.12	-0.0131782292505986\\
0.24	-0.00191865398832686\\
0.36	-0.00697646119732965\\
0.48	-0.0090423786947601\\
0.6	-0.0100268722854385\\
0.72	-0.0103615378379212\\
0.84	-0.0103994956977546\\
0.96	-0.010317555405039\\
1.08	-0.0101611522614765\\
1.2	-0.00999215256312058\\
1.32	-0.0098857055442056\\
1.44	-0.00986120016593752\\
1.56	-0.00984879932581169\\
1.68	-0.0098110317875025\\
1.8	-0.0097578752015372\\
1.92	-0.0097578752015372\\
2.04	-0.00979096681967497\\
2.16	-0.00981458720923425\\
2.28	-0.00979450425151354\\
2.4	-0.00975765973360658\\
2.52	-0.00975765973360658\\
2.64	-0.00979005834612163\\
2.76	-0.00981183160773864\\
2.88	-0.00978997836019054\\
3	-0.00975263968414929\\
};
\addlegendentry{$r=4$}

\addplot [color=colorimag, densely dotted, line width = 1.5pt]
  table[row sep=crcr]{%
0	0\\
0.12	-0.0087135901919758\\
0.24	-0.0129076586002765\\
0.36	0.000194680475098047\\
0.48	-0.00521785365741755\\
0.6	-0.00652210543369511\\
0.72	-0.00541735085121351\\
0.84	-0.00808242260431139\\
0.96	-0.0049194023386037\\
1.08	-0.00832891274672318\\
1.2	-0.00459707115028286\\
1.32	-0.00836223834706686\\
1.44	-0.00437716160579553\\
1.56	-0.00831822745793415\\
1.68	-0.00424922358141839\\
1.8	-0.00820404165041049\\
1.92	-0.00424779776907767\\
2.04	-0.00801544161454253\\
2.16	-0.0044024994761409\\
2.28	-0.00774807825419933\\
2.4	-0.0047296487556138\\
2.52	-0.00741878415454918\\
2.64	-0.00524162438448607\\
2.76	-0.00702836317271723\\
2.88	-0.00588760370121372\\
3	-0.00659303591312346\\
};
\addlegendentry{$r=10$}

\end{axis}

\end{tikzpicture}
\end{center}
	\caption{NLH (\ref{eq:nonlinearPDEex1}) with initial and boundary conditions (\ref{eq:datatest}). Sequence of parameters $\{\lambda_{r,n}^*\}_{n\geqslant 0}$ for CS($\lambda$), (\ref{explNLH}), obtained using Algorithm~\ref{General}.\ref{alg:opt} (left) and Algorithm~\ref{General}.\ref{alg:opt2} (right) with different values of $r$.}
	\label{fig:advs0}
\end{figure}
\begin{figure}[tbp]
\begin{center}
	\iftoggle{pgfplots}{\input{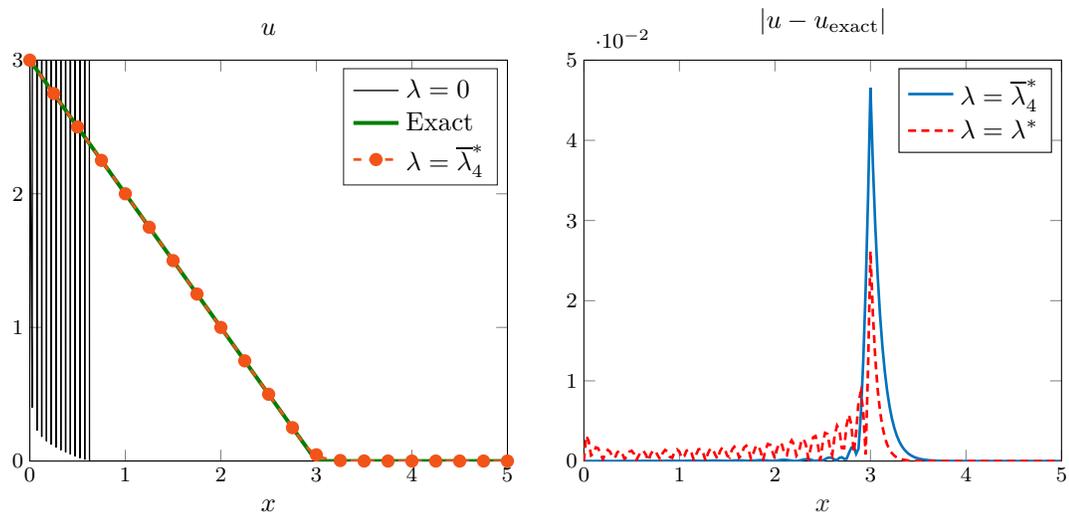}}{}
\end{center}
	\caption{NLH (\ref{eq:nonlinearPDEex1}) with initial and boundary conditions (\ref{eq:datatest}). Exact and numerical solutions of CS($\lambda$), (\ref{explNLH}), with $\lambda=0$ and $\lambda=\overline\lambda_{4}^*$ (left). Solution error for $\lambda=\overline\lambda_{4}^*$ and $\lambda=\lambda^*$ (right).}
	\label{fig:advsopt}
\end{figure}

The two algorithms introduced in this paper give equally accurate solutions. By increasing $r$, the computation time decreases, while the accuracy in the solution is only marginally affected. Moreover, both new algorithms avoid instabilities that occur when $\lambda=0$. This is shown on the left of Figure~\ref{fig:advsopt} where, as an example, we show the solution of CS($\overline\lambda^*_{4}$) and CS(0) at the final time.

On the right of Figure~\ref{fig:advsopt}, we plot the pointwise errors given by CS($\lambda^*$) and CS($\overline\lambda^*_{4}$). The error obtained with $\lambda=\overline\lambda^*_{4}$ is almost entirely located at the interface where the solution is non differentiable. Fixing $\lambda=\lambda^*$, the $L^2$ error is lower and the solution is more accurate around the interface. However, spurious oscillations are seen where the true solution is smooth.

The second benchmark problem is (\ref{eq:nonlinearPDEex1}) with
\begin{equation}\label{test2}
u(x,0)=\left(1-\frac{x^2}6\right)_+,\qquad u(-6,t)=u(6,t)=0,\qquad (x,t)\in[-6,6]\times[0,9],
\end{equation}
where $f_+=\max(f,0).$ The solution of this problem is the Barenblatt profile,
$$u_{\rm exact}(x,t)=(t+1)^{-1/3}\left(1-\frac{x^2}{6(t+1)^{2/3}}\right)_+.$$ This solution has compact support and is not differentiable at the interface points, which move outward at a finite speed. We solve this problem with $\dx=0.02$ and $\dt=0.09$.

We show in Figure~\ref{fig:advs02} that in this case the two proposed algorithms generate sequences of parameters that approach a small negative value for all the considered values of $r$.

\begin{table}[t]
\caption{NLH (\ref{eq:nonlinearPDEex1}) with initial and boundary conditions (\ref{test2}). Errors in solution and conservation laws and computation time.}\label{tab:errtab2}
\small
\centerline{\begin{tabular}{|c|c|c|c|c|}
\hline
$\lambda$ &  $\text{Err}_1$ & $\text{Err}_2$ & Solution error & Computation time\\ 
\hline
\hline
$\{\lambda_{1,n}^*\}_{n\geqslant 0}$	&  2.00e-13   & 2.55e-14 & 1.82e-04 & 8.18 \\	
\hline
$\{\lambda_{2,n}^*\}_{n\geqslant 0}$	&  1.41e-13   & 2.43e-14 & 1.41e-04 & 1.73 \\	
\hline
$\{\lambda_{4,n}^*\}_{n\geqslant 0}$	&  3.43e-13   & 3.28e-14 & 1.45e-04 & 1.51 \\
\hline
$\{\lambda_{10,n}^*\}_{n\geqslant 0}$	&  3.62e-13   & 3.20e-14 & 1.15e-04 & 1.35 \\	
\hline
\hline
$\overline\lambda_{1}^*=-6.81$e-04	&  5.05e-14   & 3.05e-14 & 5.20e-04 & 5.88 \\	
\hline
$\overline\lambda_{2}^*=-5.64$e-04	&  2.70e-14   & 2.72e-14 & 4.53e-04 & 1.64 \\	
\hline
$\overline\lambda_{4}^*=-4.38$e-04	&  2.79e-14   & 3.10e-14 & 3.78e-04 & 1.26 \\	
\hline
$\overline\lambda_{10}^*=-3.86$e-04	&  2.36e-14   & 2.93e-14 & 3.45e-04 & 1.14 \\	
\hline
\hline
$\lambda=0$	& 3.59e-14   & 2.64e-14 & 0.2989 &  0.59 \\	
\hline
$\lambda^*=-2.32$e-04	&  3.85e-14   & 3.61e-14 & 2.62e-04 & N/A\\
\hline
\end{tabular}}
\end{table}
\begin{figure}[tbp]
\begin{center}
	\iftoggle{pgfplots}{\input{L2.tex}}{}
\end{center}
	\caption{NLH (\ref{eq:nonlinearPDEex1}) with initial and boundary conditions (\ref{eq:datatest}). Sequence of parameters $\{\lambda_{r,n}^*\}_{n\geqslant 0}$ for CS($\lambda$), (\ref{explNLH}), obtained using Algorithm~\ref{General}.\ref{alg:opt} (left) and Algorithm~\ref{General}.\ref{alg:opt2} (right) with different values of $r$.}
	\label{fig:advs02}
\end{figure}
\begin{figure}[tbp]
\begin{center}
	\iftoggle{pgfplots}{\input{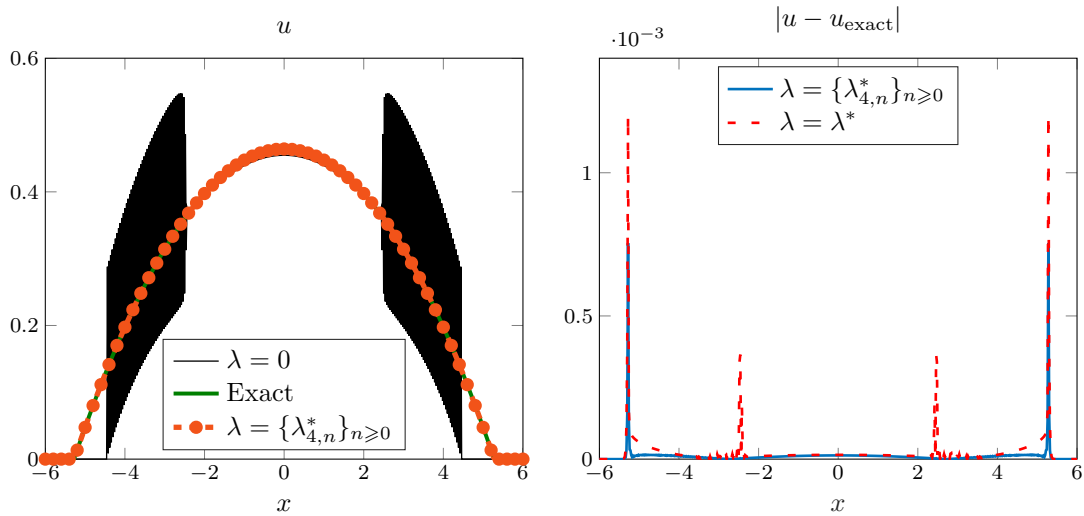}}{}
\end{center}
	\caption{NLH (\ref{eq:nonlinearPDEex1}) with initial and boundary conditions (\ref{test2}). Exact and numerical solutions of CS($\lambda$), (\ref{explNLH}), with $\lambda=0$ and $\lambda=\{\lambda_{4,n}^*\}_{n\geqslant 0}$ (left). Solution error for $\lambda=\lambda=\{\lambda_{4,n}^*\}_{n\geqslant 0}$ and $\lambda=\lambda^*$ (right).}
	\label{fig:advsopt2}
\end{figure}

Table~\ref{tab:errtab2} shows that all the schemes preserve a discrete version of the global invariants. When the value of the parameter is fixed during the iteration, this is a consequence of the preservation of the local conservation laws. When a sequence of different parameters is used, this is instead due to the zero boundary conditions (see Remark~\ref{remNLH}). Although in this case the conservation laws are not preserved locally, the error in the solution is the lowest.

The figures in Table~\ref{tab:errtab2} show that with increasing $r$ the computation time decreases while the accuracy is not significantly affected even for $r=10$. This is a consequence of the fact that the sequence of parameters obtained for  different values of $r$ quickly approach each other after just a few time-steps (see Figure~\ref{fig:advs02}). Once again, we find in Figure~\ref{fig:advsopt2} (left) that the sequence of parameters $\{\lambda_{4,n}^*\}_{n\geqslant 0}$ obtained from Algorithm~\ref{General}.\ref{alg:opt} allows us to avoid the numerical instability that occurs under the default choice, $\lambda=0$.

On the right half of Figure~\ref{fig:advsopt2} we show the solution error of CS($\lambda_{4,n}^*$)$_{n\geqslant 0}$ and of CS($\lambda^*$). The solution error obtained using the sequence of parameters given by Algorithm~\ref{General}.\ref{alg:opt} is mainly located at the points where the solution is not differentiable. Although $\lambda=\lambda^*$ minimizes the $L^2$ norm of the error with respect to any other fixed value of $\lambda$, we notice that the error at the interface can be further reduced by changing the parameter at every time-step. Moreover, for $\lambda=\lambda^*$, relatively large components of error appear near $\pm2.5$ where the solution is otherwise smooth. These are not visible in the solution errors obtained using either the adaptive sequences given by Algorithm~\ref{General}.\ref{alg:opt} or the fixed parameters given by Algorithm~\ref{General}.\ref{alg:opt2}.

\section{Conclusions and future perspectives}
In this paper we have proposed two approaches for identifying an optimal method in a parameter dependent family of numerical schemes, based on a minimization of the defect as an estimate of the local error. The first approach uses different (adaptive) values of the parameters at every time-step. In the second approach fixed values of the parameters are derived from a sequence. The latter approach does not compromise parameter depending conservation properties of geometric integrators.

The new algorithms solve an optimization problem at each time-step in order to identify the optimal values of the parameter. In principle, this can increase the computational cost of the original method prohibitively. However, in the large time-step regime, it is possible to solve the optimization problem on coarser spatial grids without compromising the accuracy of the optimal parameters, significantly decreasing the computational time.

The new approaches have been applied to families of schemes for the KdV equation and a nonlinear heat equation that preserve local conservation laws. The proposed numerical tests show that, on one hand, the new strategies effectively identify very accurate methods in each considered family of schemes. On the other hand, introducing a coarse grid for the solution of the optimization problem tremendously improves the efficiency of the new strategies. Overall, the computational time is comparable to that of other schemes in literature, while the accuracy of the proposed approach is much superior.

\subsection*{Acknowledgments}
The authors would like to thank the Isaac Newton Institute for Mathematical Sciences for support and hospitality during the programme Geometry, Compatibility and Structure Preservation in Computational Differential Equations, when work on this paper was undertaken. This work was supported by EPSRC grant number EP/R014604/1. 

\bibliographystyle{acm}
\bibliography{references}

\end{document}